\DeclareMathAlphabet{\mathcal}{OMS}{cmsy}{m}{n}
\theoremstyle{plain}
\newtheorem{theorem}{Theorem}[section]
\newtheorem{lemma}[theorem]{Lemma}
\newtheorem{proposition}[theorem]{Proposition}
\newtheorem{corollary}[theorem]{Corollary}
\newtheorem*{theorem*}{Theorem}
\theoremstyle{definition}
\newtheorem{definition}{Definition}[section]
\theoremstyle{remark}
\newtheorem{remark}{Remark}[section]
\newcommand{\build}[3]{\mathrel{\mathop{\kern 0pt#1}\limits_{#2}^{#3}}}
\providecommand{\keywords}[1]
{
  \small
  \textbf{\textit{Keywords---}} #1
  \normalsize
}
\def\SO{{\mathrm{SO}}}
\def\C{{\mathbb C}}
\def\R{\mathbb{R}}
\def\Hom{\mathrm{Hom}}
\def\geq{\geqslant}
\def\leq{\leqslant}
\def\Id{\mathrm{Id}}
\def\E{\mathbb{E}}
\def\d{\mathrm{d}}
\def\e{\mathrm{e}}
\def\Var{\mathbb{V}\mathrm{ar}}
\def\epsilon{\varepsilon}
\def\bar{\overline}
\def\vol{\mathrm{vol}}
\def\eq{\mathrm{eq}}
\def\i{\mathrm{i}}
\def\Sbb{\mathbb{S}}
\def\ddc{\d\d^c}
\def\ddc{\d\d^c}
\def\M{{\mathcal{M}}}
\title{Monte Carlo on compact complex manifolds\\ using Bergman kernels}
\author[1]{Thibaut Lemoine\footnote{Corresponding author: \href{mailto:thibaut.lemoine@math.unistra.fr}{thibaut.lemoine@math.unistra.fr}}}
\author[2]{Rémi Bardenet}
\affil[1]{Institut de Recherche Math\'ematique Avanc\'ee, UMR 7501, Universit\'e de Strasbourg, 7, rue Ren\'e Descartes, 67084 Strasbourg, France}
\affil[2]{Université de Lille, CNRS, Centrale Lille\\
UMR 9189 -- CRIStAL, F-59000 Lille, France}
\date{}
\begin{document}

\maketitle

\begin{abstract}
In this paper, we propose a new randomized method for numerical integration on a compact complex manifold with respect to a continuous volume form. 
Taking for quadrature nodes a suitable determinantal point process, we build an unbiased Monte Carlo estimator of the integral of any $\mathscr{C}^1$ function, and show that the estimator satisfies a central limit theorem, with a faster rate than under independent sampling. 
In particular, seeing a complex manifold of dimension $d$ as a real manifold of dimension $d_\R=2d$, the mean squared error for $N$ quadrature nodes decays as $N^{-1-2/d_{\mathbb{R}}}$; this is faster than previous DPP-based quadratures and reaches the optimal worst-case rate investigated by \cite{Bak} in Euclidean spaces. 
The determinantal point process we use is characterized by its kernel, which is the Bergman kernel of a holomorphic Hermitian line bundle, and we build heavily on the work of Berman that led to the central limit theorem in \citep{Ber7}. We provide numerical illustrations for the Riemann sphere.
\end{abstract}

\keywords{Bergman kernel, complex manifolds, determinantal point processes, Monte Carlo integration}

\tableofcontents

\section{Introduction}

Numerical integration, also known as quadrature, is the task of approximating an integral by a weighted sum of evaluations of the integrand. One usually distinguishes between Monte Carlo methods (MC; \citealp{Rob07}), where the nodes are random, and quasi-Monte Carlo methods (QMC; \citealp{DiPi10}), where they are deterministic. The two paradigms come with different notions of performance: for deterministic rules, one typically controls a worst-case error over a class of functions, whereas for randomized rules one usually derives concentration bounds or a central limit theorem for a fixed integrand as the number of nodes tends to infinity.

In recent years, several approaches have aimed at combining the flexibility of Monte Carlo sampling with the improved convergence rates usually associated with structured point sets.
This includes randomized QMC \citep{Owe97,Owe08}, kernel herding \citep{ChWeSm10,LiWa16}, or quadrature rules based on determinantal point processes (DPPs) \citep{BH,MCA,GaBaVa19b,BeBaCh19,Bel20,Bel21}. 
DPPs are random point processes with tractable correlation functions and a built-in repulsion between points; this negative dependence makes them natural candidates for structured Monte Carlo integration.

Most of the existing results on DPP-based quadrature concern Euclidean spaces. In this paper, we investigate instead the integration of functions over a compact complex manifold. Our quadrature nodes are sampled from a DPP canonically associated with a holomorphic line bundle over the manifold, whose kernel is the Bergman kernel. 
In particular, the Bergman kernel is projective with rank $N_k<\infty$, and the DPP thus puts all of its mass on configurations of $N_k$ points.
This geometric setting leads to two features that are, in our view, the main take-home messages of the paper.

On the one hand, the resulting Monte Carlo estimator is unbiased and satisfies a fast central limit theorem. If $\M$ has complex dimension $d$, hence real dimension $d_\R=2d$, then the fluctuations of the estimator are of order $N_k^{-1/2-1/d_\R}$.
Equivalently, the mean square error of our estimator is of order $N_k^{-1-2/d_\R}$. 
This matches the optimal worst-case rate established by \cite{Bak} for randomized integration of $\mathscr C^1$ functions in Euclidean spaces; see also \cite[Theorem 3]{Nov16}. On the other hand, the same DPP enjoys a \emph{universality} property: after the appropriate inverse-Bergman reweighting, it can be used to estimate integrals with respect to a whole family of measures absolutely continuous with respect to the original reference measure. The limiting variance is always given by the same Dirichlet-energy functional, applied to the corresponding reweighted integrand.

Our proofs strongly rely on the seminal work of \cite{Ber6,Ber7}, in particular on the central limit theorem for Bergman linear statistics proved in \cite{Ber7}. At a high level, the present paper can be read as the Monte Carlo counterpart of Berman's result: we show how to pass from a central limit theorem for linear statistics to an
unbiased quadrature rule. It can also be viewed as a complex-geometric analogue, and an improvement in the rate, of the DPP-based quadrature results of \cite{BH}. Finally, in the particular case of the sphere, our work is complementary to \cite{Ber8}, where the spherical ensemble is studied from a worst-case QMC viewpoint.

\subsection{Main result}

We now describe the setting and state our main result. The reader unfamiliar with the language of complex geometry is referred to Section~\ref{sec:DPP} and Appendix~\ref{appendix} for a more detailed introduction.

Let $L$ be a holomorphic line bundle over a compact complex manifold $\M$ of dimension $d$. If $h$ is an Hermitian metric on $L$ with local weight $\phi$, we shall denote by $\langle \cdot,\cdot\rangle_\phi$ and $|\cdot|_\phi$ the associated inner product and norm on each fiber. If $\mu$ is a finite measure on $\M$, then the pair $(\phi,\mu)$ induces an inner product on the space $H^0(M,L)$ of holomorphic sections of $L$:
\[
\langle s^{(1)},s^{(2)}\rangle_{(\phi,\mu)}=\int_\M h_x\left(s^{(1)}(x),s^{(2)}(x)\right)\d\mu(x).
\]
In the so-called \emph{semiclassical} setting, one replaces $L$ by $L^k=L^{\otimes k}$ and lets $k\to\infty$. The corresponding Hilbert space
\[
H_k=\big(H^0(M,L^k),\langle\cdot,\cdot\rangle_{(k\phi,\mu)}\big)
\]
is finite-dimensional; we denote its dimension by $N_k$.

The orthogonal projection $L^2(M,L^k)\to H^0(M,L^k)$ admits a reproducing kernel, denoted by $B_{(k\phi,\mu)}$ or simply $B_{(k\phi,\mu)}$, called the Bergman kernel. If $(s_i)_{1\leq i\leq N_k}$ is an orthonormal basis of $H_k$, then
\[
B_{(k\phi,\mu)}(x,y)=\sum_{i=1}^{N_k} s_i(x)\otimes s_i(y).
\]
On the diagonal, the Bergman kernel identifies with the function
\[
\mathcal B_{(k\phi,\mu)}(x)=\sum_{i=1}^{N_k} h_x^k(s_i(x),s_i(x)).
\]
The corresponding Bergman measure is defined by
\[
d\mu_{k\phi}(x)=\frac{1}{N_k}\mathcal B_{(k\phi,\mu)}(x)\d\mu(x).
\]

Under suitable assumptions on $(\phi,\mu)$, the Bergman measures converge towards an equilibrium measure $\mu_{\eq}^\phi$. In the smooth positive-curvature regime relevant for our main theorem, one can define the \emph{equilibrium weight}
\begin{equation}\label{eq:equil_weight}
w_{\mu}^\phi(x)=\frac{d\mu}{d\mu_\eq^\phi}(x).
\end{equation}
Beside this deterministic object, one can consider the random $N_k$-tuple $(X_1,\dots,X_{N_k})$ on $\M$ whose joint density with respect to $\mu^{\otimes N_k}$ is proportional to $|\det(s_i(x_j))|_{k\phi}^2,$ where $(s_i)$ is any orthonormal basis of $H_k$. This defines a determinantal point process with kernel $B_{(k\phi,\mu)}$, which we call the Bergman ensemble associated with the weighted measure $(k\phi,\mu)$.

A fundamental result of Berman states that the empirical average of a test function under this DPP satisfies a central limit theorem. For a positive weight $\phi$ and a function $f$, define the \emph{Dirichlet energy}
\begin{equation}\label{eq:Dirichlet}
\mathcal{E}_\phi(f):=V^{-1+1/d}\Vert\d f\Vert_{\ddc\phi}^2=V^{-1+1/d}\int_\M \vert\nabla f\vert_{\ddc\phi}^2\frac{(\ddc\phi)^d}{d!},\qquad V:=\int_\M\frac{(\ddc\phi)^d}{d!}.
\end{equation}

\begin{theorem}[Theorem 1.5 in \citealp{Ber7}]\label{thm:Berman}
Let $L$ be a holomorphic line bundle over a compact complex manifold $\M$. Let $(\phi,\mu)$ be a strongly regular weighted measure and $\mu_{\eq}$ be the associated Monge--Amp\`ere measure. For any $k\in\mathbb N^\ast$, let
$(X_1,\dots,X_{N_k})$ be a DPP with kernel $B_{(k\phi,\mu)}$. For any Lipschitz continuous $f:\M\to\mathbb R$ with compact support included in the weak bulk\footnote{See \citep[Theorem 3.4]{Ber} and \citep[Theorem 3.1]{Ber7} for a definition of the weak bulk.},
\[
N_k^{\frac12+\frac{1}{2d}}\left(\frac{1}{N_k}\sum_{i=1}^{N_k}f(X_i)-\mathbb E\left[\frac{1}{N_k}\sum_{i=1}^{N_k}f(X_i)\right]\right)\overset{(d)}{\longrightarrow}\mathcal N\left(0,\frac12\mathcal{E}_\phi(f)\right).
\]
\end{theorem}

As it stands, this theorem is not yet a quadrature result. Indeed,
\[
\mathbb E\left[\frac{1}{N_k}\sum_{i=1}^{N_k} f(X_i)\right]=\frac{1}{N_k}\int_\M f(x)B_{(k\phi,\mu)}(x,x)\d\mu(x),
\]
which depends on $k$ and converges to $\int_\M f\d\mu_{\eq}$, not to the target integral $\int_\M f\d\mu$. To obtain an unbiased Monte Carlo estimator of $\int_\M f\d\mu$, one has to compensate for the Bergman density on the diagonal. This leads to our main result.

\begin{theorem}\label{thm:MC_main}
Let $L$ be a positive holomorphic line bundle over a compact complex manifold $\M$, and let $(\phi,\mu)$ be a weighted measure such that $h=e^{-\phi}$ is a smooth metric with $\ddc\phi>0$, and $\mu$ is a probability measure defined by $\d\mu=\rho\frac{(\ddc\phi)^d}{d!},$ with $\rho\in \mathscr C^\infty(\M)$, $\rho>0$. Let $F$ be a line bundle endowed with a smooth local weight $\psi$. For any $k\in\mathbb N^\ast$, let $(X_1,\dots,X_{N_k})$ be a DPP with kernel $B_{(k\phi+\psi,\mu)}$. Then, for any $f\in \mathscr C^1(\M)$,
\begin{equation}
  \label{e:statement}
  N_k^{-\frac12+\frac{1}{2d}}\left(\sum_{i=1}^{N_k}\frac{f(X_i)}{B_{(k\phi+\psi,\mu)}(X_i,X_i)}-\int_\M f(x)\d\mu(x)\right)\overset{(d)}{\longrightarrow}\mathcal N\left(0,\frac12\mathcal{E}_\phi(fw_\mu^\phi)\right),
\end{equation}
where $w_\mu^\phi$ is defined in \eqref{eq:equil_weight}.
\end{theorem}

Several comments are in order.
First, the theorem shows that the error of the estimator in \eqref{e:statement} has size $N_k^{-1/2-1/(2d)}=N_k^{-1/2-1/d_\R}$, where $d_\R=2d$ is the dimension of $\mathcal{M}$ as a \emph{real} manifold.
In particular, the mean square error of the estimator decays as $N_k^{-1-1/d}=N_k^{-1-2/d_\R}.$ This is precisely the optimal worst-case rate obtained by \cite{Bak} in Euclidean spaces for randomized integration of $\mathscr C^1$ functions.
Second, at first sight, Theorem~\ref{thm:MC_main} might look like a direct consequence of Berman's Theorem~\ref{thm:Berman} with a $k$-dependent test function. Indeed, if one defines
\[
f_k(x)=f(x)\frac{N_k}{B_{(k\phi+\psi,\mu)}(x,x)},
\]
then the estimator in Theorem~\ref{thm:MC_main} can be rewritten as
\[
\frac{1}{N_k}\sum_{i=1}^{N_k} f_k(X_i),
\]
and one might hope to apply the central limit theorem in Theorem~\ref{thm:Berman} to $f_k$.
The difficulty is that Berman's result is formulated for a fixed test function, while the sequence $(f_k)$ depends on the Bergman kernel itself. 
Controlling the asymptotic behavior of these weighted observables therefore requires additional input. 
Our main technical contribution is precisely to extend Berman's asymptotic analysis to this setting. 
Under the stronger assumptions of Theorem~\ref{thm:MC_main}, the inverse diagonal Bergman density converges in $\mathscr C^1$ to the equilibrium weight $w_{\mu}^\phi$, and we prove a central limit theorem for weighted Bergman linear statistics whose test functions vary with $k$. This is the key step that turns Berman's linear-statistics CLT into an unbiased quadrature theorem.
Our third and last comment is the topic of the next subsection.

\subsection{Importance sampling and universality}

The fact that Theorem~\ref{thm:MC_main} holds with $B_{(k\phi+\psi,\mu)}$ in a way that the target integral and the asymptotic variance do \emph{not} depend on $\psi$ can be translated to an importance sampling result.

\begin{corollary}\label{cor:universality}
Let $L$ be a holomorphic line bundle over a compact complex manifold $\M$, and let $(\phi,\mu)$ satisfy the assumptions of Theorem~\ref{thm:MC_main}. For any smooth $\psi:\M\to\mathbb R$ such that $e^{-\psi}\mu$ is a probability measure, if $(X_1,\dots,X_{N_k})$ is a DPP with kernel $B_{(k\phi,e^{-\psi}\mu)}$, then for any $f\in\mathscr C^1(\M)$,
\begin{equation}
  \label{eq:universality1}
\sqrt{N_k^{1+1/d}}
\left(\sum_{i=1}^{N_k}\frac{f(X_i)e^{\psi(X_i)}}{B_{(k\phi,e^{-\psi}\mu)}(X_i,X_i)}-\int_\M f(x)\d\mu(x)\right)\overset{(d)}{\longrightarrow}\mathcal N\left(0,\frac12\mathcal{E}_\phi(fw_{\mu}^\phi)\right).
\end{equation}
\end{corollary}

Corollary~\ref{cor:universality} can be interpreted as follows: assume we want to estimate $\int_\M f(x)\d\mu(x)$ using the Bergman ensemble with kernel $B_{(k\phi,\mu)}$, but that the Bergman ensemble with kernel $B_{(k\phi,e^{-\psi}\mu)}$ is easier to sample. 
We can run our Monte Carlo procedure with this second Bergman ensemble and obtain an unbiased estimator with the \emph{same} asymptotic variance as with the first ensemble. 
In particular, the asymptotic variance does not depend on the density $e^{-\psi}$. 
On the one hand, this should be compared with classical importance sampling using independent variables, where changing the proposal distribution typically changes the asymptotic variance \citep{RoCa04}. 
On the other hand, the fact that the fast CLT is maintained relates to a universality phenomenon that was observed for multivariate orthogonal polynomial ensembles under the Nevai condition in \cite[Theorem 2.3 and Remark 4]{BH}; in our setting, universality appears in a geometric form through the invariance of the underlying Bergman ensemble under a suitable simultaneous change of metric and reference measure.

There is a complementary, dual way to formulate this universality. 
Fix a smooth probability measure $\mu$ satisfying the assumptions of Theorem~\ref{thm:MC_main}, and sample the Bergman ensemble associated with $(k\phi,\mu)$. Then this single Bergman ensemble 
yields fast unbiased quadrature rules for the whole smooth class $\{e^\psi\mu:\psi\in \mathscr C^\infty(M),\ \int_\M e^\psi\mu=1\}$. The fluctuation scale is universal over this class, while the limiting variance changes only through the target weight $e^\psi w_\mu^\phi.$

\subsection{Relation to previous work}

Our initial motivation was to generalize the work of \cite{BH}, where integration on the hypercube is performed using multivariate orthogonal polynomial ensembles. Their theorem yields a fast central limit theorem in a real Euclidean setting. The present paper shows that Bergman ensembles play
an analogous role on compact complex manifolds, but with a better rate when the complex structure is taken into account: if $\M$ has complex dimension $d$, then the real dimension is $d_\R=2d$, and our error rate is of order $N^{-1/2-1/d_\R}$.

Our work is also more generally related to recent progress on numerical integration on manifolds. On the deterministic side, QMC constructions on compact Riemannian manifolds were investigated for instance in \cite{BCCGST,BDE,Ber8}. On the randomized side, Markov chain Monte Carlo methods on manifolds embedded in Euclidean spaces were studied for example by \cite{GC,DHS,ZHG,EGO}. These methods address different questions and rely on different assumptions, but they
provide useful benchmarks. Compared with MCMC-type methods, our DPP-based quadrature achieves a faster fluctuation scale when measured in number of function evaluations. Compared with QMC-type methods, it yields an unbiased randomized rule together with an explicit Gaussian fluctuation theory.

Finally, in the special case of the sphere, a few DPPs have been investigated.  The so-called harmonic ensemble, for instance, corresponds to the projection onto spherical harmonics of bounded degree. Starting with the seminal work of \cite{Joh97} in real dimension $1$, central limit theorems have been obtained for the harmonic ensemble, with the same convergence rate as in \cite{BaHa20}; see \cite{LeMaOr24} for an elegant analytic proof and a survey of previous results. Alternately, the spherical ensemble with $d=1$ was introduced in \cite{Kri09}, and a central limit theorem with the same rate as our Theorem~\ref{thm:MC_main} was first obtained by \cite{RiVi07}. We also refer to \cite{LeMaOr24} for an alternative proof and references. \cite{Ber8} recently studied the spherical ensemble from the perspective of worst-case quadrature in Sobolev spaces. Our Theorem~\ref{thm:MC_main} may be viewed as the corresponding Monte Carlo statement for smooth enough fixed integrands, and Section~\ref{sec:sphere} provides numerical illustrations in this case.

Let us also mention the works of Beltr\'an and Etayo on spherical-type determinantal processes in higher dimension. In dimension one, the Bergman ensemble on $\mathbb{CP}^1\simeq S^2$ is precisely the classical spherical ensemble. \cite{BE18} extended the spherical ensemble to a projective ensemble on $\mathbb{CP}^d$, which can be seen as a particular case of Bergman ensemble, and \cite{BE19} constructed a generalized spherical ensemble on $S^{2d}$. These papers develop a clear relationship between Bergman ensembles and spherical ensembles in higher dimensions.

\subsection{Outline of the paper}

The rest of the paper is organized as follows. In Section~\ref{sec:DPP}, we introduce the Bergman ensembles that will be used as quadrature nodes and recall the Bergman-kernel asymptotics needed later. Section~\ref{sec:CLT} contains a central limit theorem for weighted Bergman linear statistics, which is the main technical ingredient of the paper. In Section~\ref{sec:proofs}, we deduce Theorem~\ref{thm:MC_main} and Corollary~\ref{cor:universality} from this result.

In Section~\ref{sec:sphere}, we make the construction explicit on the Riemann sphere, where the Bergman kernel and the associated DPP can be written in closed form, and we compare our estimator with several competitors by numerical experiments. Finally, Section~\ref{sec:discussion} discusses limitations of the method and possible directions for future work.

\section{Bergman ensembles}\label{sec:DPP}

In this section, we will introduce the point processes involved in our framework: the Bergman ensembles. They live on a complex manifold $\M$, and they are related to the reproducing kernel of a Hilbert space of holomorphic sections of a line bundle over $\M$, called the Bergman kernel. 
We will assume basic knowledge of complex manifolds and line bundles, although we will spend some time describing the Bergman kernel later; however, since we want to ensure that the readers who are rather familiar with numerical integration than with complex geometry, we recall in Appendix~\ref{appendix} the minimal definitions required to understand properly the objects in play here.

\subsection{Definitions}\label{sec:DPP_def}

Let $\M$ be a compact complex manifold of dimension $d$ endowed with a Borel probability measure $\mu$ induced by a continuous positive volume form, $L$ be a holomorphic line bundle over $\M$ with Hermitian metric $h$, represented by a local weight function $\phi$. Set $N=\dim H^0(\M,L)$, and denote by $\mathcal{P}_{(\phi,\mu)}$ the probability measure on $\M^{N}$ defined by
\begin{equation}
  \label{eq:P_phi}
  \d\mathcal{P}_{(\phi,\mu)}(x_1,\ldots,x_N)=\frac{1}{N!}\vert\det(s_i(x_j))\vert_{\phi}^2\d\mu^{\otimes N}(x_1,\ldots,x_N),
\end{equation}
where $(s_i)$ is an orthonormal basis of $(H^0(\M,L),\langle\cdot,\cdot\rangle_{(\phi,\mu)})$.
The fact that $\mathcal{P}_{(\phi,\mu)}$ has unit mass is a standard application of the generalized Cauchy--Binet identity \citep[Proposition 2.10]{Joh06}. We will denote by $\E_{(\phi,\mu)}$ the expectation with respect to $\mathcal{P}_{(\phi,\mu)}$, i.e., for any bounded measurable $F:\M^N\to\C$,
\[
\E_{(\phi,\mu)}[F(X_1,\ldots,X_N)] = \int_{\M^N} F \d\mathcal{P}_{(\phi,\mu)}.
\]
\begin{definition}
  \label{def:bergman_ensemble}
  The \emph{Bergman ensemble} for the weighted measure $(\phi,\mu)$ is the point process $\sum_{i=1}^N\delta_{X_i},$ where $(X_1,\ldots,X_N)$ is a family of random variables on $\M$ with distribution $\mathcal{P}_{(\phi,\mu)}$ in \eqref{eq:P_phi}.
\end{definition}

Recall that a (simple) point process on $\M$ is a random configuration on $\M$, or equivalently the counting measure of this configuration. 
In particular, given a family $(X_1,\ldots,X_N)$ of almost-surely distinct random variables on $\M$, the random measure $\sum_{i=1}^N\delta_{X_i}$ almost surely has distinct atoms, and thus defines a simple point process. 
The $n$-point correlation function $\rho_n:\M^n\to\R$ of a simple point process, when it exists, is characterized by the property that, for any bounded measurable function $f:\M^n\to\R$,
\[
\E\left[\sum_{i_1\neq\cdots\neq i_n} f(X_{i_1},\ldots,X_{i_n})\right] = \int_{\M^n}f(x_1,\ldots,x_n)\rho_n(x_1,\ldots,x_n)\d\mu^{\otimes n}(x_1,\ldots,x_n).
\]
A simple point process is said to be \emph{determinantal} with kernel $K:\M\times\M\to\C$ if all its correlation functions can be expressed as determinants of a matrix whose coefficients are given by evaluations of the kernel:
\begin{equation}
\rho_n(x_1,\ldots,x_n)=\det(K(x_i,x_j))_{1\leq i,j\leq n}.
\end{equation}
It shows a structural repulsion of the points in the configuration, because the correlation vanishes trivially when there are $i\neq j$ such that $x_i=x_j$, thanks to the alternate property of the determinant. We will see in a moment that the Bergman ensemble is determinantal, but we will first spend the next subsection defining the underlying kernel.

\subsection{Bergman kernel}\label{sec:Bergman-kernel}

Let $L$ be a holomorphic line bundle over a compact complex manifold $\M$, endowed with an Hermitian metric $h$. Let $\mu$ be a continuous volume form on $\M$ and $\phi$ be the local weight corresponding to $h$. The \emph{Bergman kernel} $B_{(\phi,\mu)}$ of $L$ with respect to the weighted measure $(\phi,\mu)$ is the Schwartz kernel (cf. \citealp[Chapter 6]{LeF} for a detailed introduction of such kernels) of the orthogonal projection $L^2(\M,L) \longrightarrow H^0(\M,L)$. Namely, it is a section of $L\boxtimes\overline{L}\rightarrow \M\times \M$, and it can be written as
\begin{equation}\label{eq:unweighted_Bergman_kernel}
B_{(\phi,\mu)}(x,y) = \sum_{i=1}^Ns_i(x)\otimes\overline{s_i(y)},\ \forall x,y\in \M,
\end{equation}
where $(s_i)$ is an orthonormal basis of $H^0(\M,L)$ for the inner product $\langle\cdot,\cdot\rangle_{(\phi,\mu)}$. By construction, $B_{(\phi,\mu)}$ is the reproducing kernel of the Hilbert space $(H^0(\M,L),\langle\cdot,\cdot\rangle_{(\phi,\mu)})$, which means that
\begin{equation}\label{eq:reprod_Bergman}
\int_\M B_{(\phi,\mu)}(x,y)\cdot s(y)\d\mu(y) = s(x),\ \forall s\in H^0(\M,L),\ \forall x\in \M.
\end{equation}
The dot in \eqref{eq:reprod_Bergman} represents the contraction between the Bergman kernel and the section $s$ induced by \eqref{eq:contraction}, so that the left-hand side of \eqref{eq:reprod_Bergman} is the decomposition of $s$ onto the orthogonal basis $(s_i)$; see \citep[Lemma 6.3.2]{LeF} for more details.

As we shall see later, the correlation functions of our point processes will be expressed as determinants of the Bergman kernel. Let us stress that such a determinant is not canonically defined: for instance, we have for $x,y\in\M$, at least formally,
\begin{equation}\label{eq:wrong_det}
\left\vert\begin{matrix}
B_{(\phi,\mu)}(x,x) & B_{(\phi,\mu)}(x,y)\\
B_{(\phi,\mu)}(y, x) & B_{(\phi,\mu)}(y,y)
\end{matrix}\right\vert = B_{(\phi,\mu)}(x,x)\otimes B_{(\phi,\mu)}(y,y) - B_{(\phi,\mu)}(x,y)\otimes B_{(\phi,\mu)}(y,x).
\end{equation}
However, this equality does not really make sense because we are adding elements of two different vector spaces: one is a section of $L_x\otimes\overline{L}_x\otimes L_y\otimes\overline{L}_y$ and the other is a section of $L_x\otimes\overline{L}_y\otimes L_y\otimes\overline{L}_x$. We circumvent this difficulty by using contractions on tensor products of each fiber and its dual, thanks to the following isomorphism of vector spaces, for any finite-dimensional vector space $E$,
\[
\left\lbrace\begin{array}{ccc}
L_x\otimes E\otimes\overline{L_x} & \build{\longrightarrow}{}{\sim} & E\\
u_x\otimes w\otimes\overline{v_x} & \longmapsto & h_x(u_x,v_x)w
\end{array}\right..
\]
In particular, the Bergman kernel on the diagonal can be identified with the function $\mathcal{B}_{(\phi,\mu)}:\M\to\C$ given by
\[
B_{(\phi,\mu)}(x,x)=\mathcal{B}_{(\phi,\mu)}(x): = \sum_{i=1}^N h_x(s_i(x),s_i(x)),\ \forall x\in \M.
\]
The reproducing property leads to the following characterization of the Bergman kernel on the diagonal, called \emph{extremal property},
\begin{equation}\label{eq:ext_prop_Bergman}
\mathcal B_{(\phi,\mu)}(x) = \sup\{\vert s(x)\vert_\phi^2:\ s\in H^0(\M,L), \Vert s\Vert_{(\phi,\mu)}^2\leq 1\}.
\end{equation}
The reader familiar with orthogonal polynomials will notice the similarity with the extremality property of the Christoffel function \citep[Chapter 1, Section 8]{Sim05}.
Moreover, using the contraction $\cdot$ in the sense of~\eqref{eq:contraction}, Equation \eqref{eq:wrong_det} becomes
\[
\left\vert\begin{matrix}
B_{(\phi,\mu)}(x,x) & B_{(\phi,\mu)}(x,y)\\
B_{(\phi,\mu)}(y, x) & B_{(\phi,\mu)}(y,y)
\end{matrix}\right\vert = B_{(\phi,\mu)}(x,x)B_{(\phi,\mu)}(y,y) - B_{(\phi,\mu)}(x,y)\cdot B_{(\phi,\mu)}(y,x).
\]
Following \citep{Lem}, this leads to the following convention for the definition of the determinant.
\begin{definition}\label{def:det_Bk}
The \emph{determinant} $\det(B_{(\phi,\mu)}(x_i,x_j))_{1\leq i,j\leq n}$ is defined by
\begin{equation}
  \label{e:def_determinants}
  \det(B_{(\phi,\mu)}(x_i,x_j))_{1\leq i,j\leq n} = \sum_{\sigma\in\mathfrak{S}_n} \varepsilon(\sigma) \sum_{i_1,\ldots,i_n=1}^N \prod_{j=1}^n h_{x_j}(s_{i_j}(x_j),s_{i_{\sigma^{-1}(j)}}(x_j)).
\end{equation}
\end{definition}

\begin{remark}\label{rmk:change_kernel}
By standard arguments, if $\psi:\M\to\R$ is a continuous function, then the metric $\widetilde{h}=he^{-\psi}$ yields a new Hermitian metric on $L$, and for any sections $s,t\in H^0(\M,L)$, we have the obvious identity
\[
\int_\M h_x(s(x),t(x))\d\mu(x)=\int_\M \widetilde{h}_x(s(x),t(x))e^{\psi(x)}d\mu(x),
\]
so that the $L^2$ structures on $H^0(\M,L)$ obtained from $(h,\mu)$ and $(\widetilde{h},e^{\psi}\mu)$ coincide. In particular, if we denote by $\phi$ the local weight of $h$, so that $h=e^{-\phi}$ and $\widetilde{h}=e^{-\phi-\psi}$, we have the following important identification between Bergman kernels: $B_{(\phi,\mu)}=B_{(\phi+\psi,e^{\psi}\mu)}$. However, the identifications of the kernels by contractions are \emph{not} the same: for instance, on the diagonal,
\[
\mathcal B_{(\phi+\psi,e^{\psi}\mu)}(x)=\sum_{i=1}^N h_x(s_i(x),s_i(x))e^{-\psi(x)}=\mathcal B_{(\phi,\mu)}(x)e^{-\psi(x)},\quad \forall x\in\M.
\]
This also entails that the determinants, as defined in \eqref{e:def_determinants}, do not coincide, but rather differ by a scaling factor:
\[
\det(B_{(\phi+\psi,e^{\psi}\mu)}(x_i,x_j))_{1\leq i,j\leq n}=\det(B_{(\phi,\mu)}(x_i,x_j))_{1\leq i,j\leq n}\prod_{i=1}^n e^{-\psi(x_i)}.
\]
\end{remark}

\begin{remark}
If we consider a small open subset $U\subset\M$ with local coordinates $z\in\C^d$ where $L$ is trivialized, each holomorphic section $s_i$ can be written $s_i(z)=f(z)e_U(z)$ where $f:\C^d\to\C$ is a holomorphic map and $e_U$ is a local frame, so that
\[
\vert s_i(z)\vert_{\phi}=\vert f_i(z)\vert \vert e_U(z)\vert=\vert f_i(z)\vert e^{-\frac12\phi(z)}.
\]
An important fact is that, although $s_i$ is a holomorphic section and $f_i$ is a holomorphic map, there is no reason for $\vert s_i(z)\vert_\phi$ to be holomorphic, because $z\mapsto e^{-\frac12\phi(z)}$ is not necessarily holomorphic. This is why, when considering local coordinates on $U$, it will sometimes be more convenient to work with the \emph{unweighted} Bergman kernel $K_{(\phi,\mu)}(z,w)=\sum_i f_i(z)\overline{f_i}(w)$, which is holomorphic in $z$ and antiholomorphic in $w$, compared to the weighted Bergman kernel 
$$ 
  B_{(\phi,\mu)}(z,w)=\sum_i s_i(z)\overline{s_i}(w)=K_{(\phi,\mu)}(z,w)e^{-\frac12\phi(z)}e^{-\frac12\phi(w)},
$$
which is not.

Unlike its name suggests, the unweighted kernel $K_{(\phi,\mu)}$ \emph{still depends} on the weighted measure $(\phi,\mu)$, because the functions $f_i$ are local representatives of the family $(s_i)$ of sections that are orthonormal \emph{with respect to the weighted measure}. In light of Remark~\ref{rmk:change_kernel}, the ambiguity of identifications of kernels $B_{(\phi+\psi,e^{\psi}\mu)}$ and $B_{(\phi,\mu)}$ when using contractions vanishes for unweighted kernels: in local coordinates, we still get the equality $K_{(\phi+\psi,e^{\psi}\mu)}(z,w)=K_{(\phi,\mu)}(z,w)$ because the Bergman kernels are associated with the same orthonormal basis of holomorphic sections. 
  This is why in some references, like \citep{Ber7}, the Bergman kernel is identified with the unweighted kernel at a global level, writing for instance $K_k(x,y)$ instead of $B_{(k\phi,\mu)}$ for the global section of $L^k\boxtimes \overline{L}^k$. 
  We prefer to keep the notation that involves the weighted measure to stress the dependence on both the reference measure and the Hermitian metric, although experts in complex geometry might find it unnecessarily cumbersome.
\end{remark}

\subsection{Bergman ensemble in the semiclassical setting}

The Bergman ensemble, as shown in \citep{Lem} for instance, is a \emph{determinantal point process} (DPP) with kernel $B_{(\phi,\mu)}$; recall that it means that its correlation functions exist and are given by
\begin{equation}
\rho_n(x_1,\ldots,x_n) = \det(B_{(\phi,\mu)}(x_i,x_j))_{1\leq i,j\leq n},\ \forall n\geq 1,\ \forall x_1,\ldots,x_n\in\M.
\end{equation}
Note that the determinant of the Bergman kernel is given in Definition \ref{def:det_Bk}. 
Let us remark that, in particular, the joint density of \eqref{eq:P_phi} can be rewritten in terms of the Bergman kernel, 
\[
\d\mathcal{P}_{(\phi,\mu)}(x_1,\ldots,x_N)=\frac{1}{N!}\det(B_{(\phi,\mu)}(x_i,x_j))_{1\leq i,j\leq N}\d\mu^{\otimes N}(x_1,\ldots,x_N).
\]
In the parlance of determinantal point processes \citep{HKPV06}, the Bergman ensemble is a projection DPP.
Another important comment is that, by Remark~\ref{rmk:change_kernel}, a DPP of kernel $B_{(\phi,\mu)}$ is also a DPP of kernel $B_{(\phi+\psi,e^{\psi}\mu)}$.
Indeed, although the determinant has a scaling factor due to the contractions, the factor exactly cancels out the one obtained in the change of reference measure, and the whole probability measure remains the same. 

Until now, the line bundle has been arbitrary, but in the sequel we shall focus on the case where $L$ is replaced by $L^k\otimes F$, for large $k$, with corresponding weight $k\phi+\psi$. It is sometimes called a \emph{semiclassical setting}, and it is in particular a standard framework in geometric quantization (see \citep{Cha03} for instance). Since $\M$ is compact, the space $H^0(\M,L^k\otimes F)$ has finite dimension $N_k$. We will also put a further assumption on the line bundle.

\begin{definition}
Let $L$ be a holomorphic line bundle over a compact complex manifold $\M$. $L$ is said to be
\emph{positive} or \emph{ample} if there exists a smooth Hermitian metric $h_\phi$ on $L$ such that $\ddc\phi>0$.
\end{definition} 
The assumption of a positive bundle, which is required in our Theorem~\ref{thm:MC_main}, is actually quite strong; in contrast, in Berman's Theorem~\ref{thm:Berman}, the line bundle is assumed to be \emph{big}, which is weaker -- and we shall not enter in detail in a precise comparison here. The reason we need the positivity assumption is that one of our key estimates only works in this setting.

\subsection{Convergence of the Bergman measures}

The first macroscopic estimation of the Bergman ensemble for the weighted measure $(k\phi+\psi,\mu)$ for large $k$ is given by the asymptotics of the \emph{Bergman measures} $\beta_k(x)\d\mu(x)=\frac{1}{N_k}B_{(k\phi+\psi,\mu)}(x,x)\d\mu(x)$. Indeed, as
\[
\E_{k\phi+\psi}\left[\frac{1}{N_k}\sum_{i=1}^{N_k} f(X_i)\right] = \int_\M f(x)\d\beta_k(x),
\]
the weak convergence in expectation of the empirical measures of the DPP is equivalent to the weak convergence of the Bergman measures. In the case of a compact complex manifold $\M$ endowed with a positive Hermitian line bundle $L$, the Bergman measures converge pointwise (hence weakly) to an equilibrium measure $\mu_\eq$ because of the diagonal expansion of the Bergman kernel. In the more general case studied by \cite{Ber7}, the (weak or pointwise) convergence of Bergman measures is not automatic.

The Bergman measure $\beta_k$ has density $\frac{1}{N_k}\mathcal B_{(k\phi+\psi,\mu)}$ with respect to $\d\mu$; we will actually control the convergence of the inverse of this density, for a reason that will appear later. Let $w_\mu^\phi$ be the equilibrium weight defined in~\eqref{eq:equil_weight}. The following lemma is an analog of a classical result about the Christoffel--Darboux kernel for orthogonal polynomials on the unit circle \citep[Theorem 2.15.1]{Sim} and on the real segment $[-1,1]$ \citep[Theorem 3.11.1]{Sim}. 

\begin{lemma}\label{lem:ratio_C1}
Let $\M$ be a compact complex manifold of dimension $d$, and let $L\to \M$ be a positive holomorphic line bundle endowed with a smooth Hermitian metric $h=e^{-\phi}$ such that $\ddc\phi>0$ on $\M$. Let $\mu=\rho\frac{\omega^d}{d!}$ with $\omega=\ddc\phi$ and $\rho\in \mathscr{C}^\infty(\M)$, $\rho>0$. Let $F\to M$ be a holomorphic line bundle endowed with a smooth Hermitian metric $e^{-\psi}$. For $k$ large enough, the ratio $r_k(x):=N_k/\mathcal B_{(k\phi+\psi,\mu)}(x)$ is well-defined, and we have
\begin{equation}\label{eq:Totik}
\|r_k-w_\mu^\phi\|_{\mathscr{C}^1(\M)}=O(k^{-1}).
\end{equation}
\end{lemma}

\begin{proof}
Set
\[
\omega:=\ddc\phi,\qquad \Omega:=\frac{\omega^d}{d!},\qquad V:=\int_\M \Omega .
\]
Thus $\d\mu=\rho\Omega$, with $\rho\in \mathscr C^\infty(\M)$ and $\rho>0$. We first reduce the reference measure to the K\"ahler volume form $\Omega$. Define a new Hermitian metric on $F$ by $\widehat h_F:=\rho h_F .$ Equivalently, if $h_F=e^{-\psi}$ locally, then
\[
\widehat h_F=e^{-\widehat\psi},\qquad \widehat\psi=\psi-\log\rho .
\]
Since $\rho$ is smooth and positive and $h_F$ is smooth, $\widehat h_F$ is a smooth Hermitian metric on $F$. For any $s,t\in H^0(\M,L^k\otimes F)$, we have
\[
\int_\M \langle s(x),t(x)\rangle_{h_L^k\otimes h_F}d\mu(x)=\int_\M \langle s(x),t(x)\rangle_{h_L^k\otimes h_F}\rho(x)\Omega(x)=\int_\M \langle s(x),t(x)\rangle_{h_L^k\otimes \widehat h_F}\Omega(x).
\]
It follows that the two Hilbert structures $\bigl(H^0(\M,L^k\otimes F), h_L^k\otimes h_F,\mu\bigr)$ and $\bigl(H^0(\M,L^k\otimes F), h_L^k\otimes \widehat h_F,\Omega\bigr)$ coincide.

Let $(s_{j,k})_{1\leq j\leq N_k}$ be an orthonormal basis for this common Hilbert structure. Then
\[
\mathcal B_k(x):=\mathcal B_{(k\phi+\psi,\mu)}(x)=\sum_{j=1}^{N_k}|s_{j,k}(x)|^2_{h_L^k\otimes h_F}.
\]
Let $\widehat{\mathcal B}_k(x)$ denote the scalar diagonal Bergman kernel computed with respect to the metric $h_L^k\otimes \widehat h_F$ and the reference volume form $\Omega$. With the same orthonormal basis,
\[
\widehat{\mathcal B}_k(x)=\sum_{j=1}^{N_k}|s_{j,k}(x)|^2_{h_L^k\otimes \widehat h_F}.
\]
Since $\widehat h_F=\rho h_F$, we get the pointwise identity
\[
\widehat{\mathcal B}_k(x)=\rho(x)\mathcal B_k(x).
\]
We now apply the smooth diagonal Bergman kernel expansion for a positive line bundle with a smooth auxiliary bundle. More precisely, by \cite[Theorem 4.1.1]{MaMa}, applied to the positive line bundle $(L,h_L)$, the auxiliary line bundle $(F,\widehat h_F)$, and the K\"ahler volume form $\Omega=\omega^d/d!$, one has, for every $m\geq 0$,
a $\mathscr C^m$-asymptotic expansion
\[
\widehat{\mathcal B}_k\sim k^d \widehat b_0+k^{d-1}\widehat b_1+k^{d-2}\widehat b_2+\cdots .
\]
With the present normalization of $\Omega$, the leading coefficient is $\widehat b_0\equiv 1.$ In particular, taking $m=1$ and keeping only the first term gives
\[
k^{-d}\widehat{\mathcal B}_k=1+O_{\mathscr C^1(M)}(k^{-1}).
\]
Using $\widehat{\mathcal B}_k=\rho \mathcal B_k$, we obtain
\[
k^{-d}\mathcal B_k= \rho^{-1}k^{-d}\widehat{\mathcal B}_k=\rho^{-1}+O_{\mathscr C^1(\M)}(k^{-1}).
\]
Set $b_k:=k^{-d}\mathcal B_k.$ Then
\[
b_k=\rho^{-1}+O_{\mathscr C^1(\M)}(k^{-1}).
\]

Next, by the definition of the diagonal Bergman kernel and by orthonormality,
\[
N_k=\sum_{j=1}^{N_k}\|s_{j,k}\|^2=\int_\M \mathcal B_kd\mu=\int_\M \widehat{\mathcal B}_k\Omega .
\]
Integrating the preceding Bergman kernel expansion gives
\begin{equation}\label{eq:volume}
N_k=k^d\int_\M \Omega+O(k^{d-1})=Vk^d+O(k^{d-1}).
\end{equation}
Equivalently, if $n_k:=k^{-d}N_k,$ then
\[
n_k=V+O(k^{-1}).
\]
Since $\rho^{-1}$ is positive on the compact manifold $\M$, there exists $c>0$ such that $\rho^{-1}\geq c$. The convergence
\[
b_k=\rho^{-1}+O_{\mathscr C^1(\M)}(k^{-1})
\]
therefore implies that, for $k$ large enough, $b_k\geq c/2$. Hence inversion is a smooth operation in a $\mathscr C^1$-neighbourhood of $b_k$, and
\[
b_k^{-1}=\rho+O_{\mathscr C^1(\M)}(k^{-1}).
\]
Therefore
\[
r_k=\frac{N_k}{\mathcal B_k}=\frac{k^{-d}N_k}{k^{-d}\mathcal B_k}=\frac{n_k}{b_k}=\bigl(V+O(k^{-1})\bigr)\bigl(\rho+O_{\mathscr C^1(\M)}(k^{-1})\bigr).
\]
Finally,
\[
r_k=V\rho+O_{\mathscr C^1(\M)}(k^{-1}).
\]

It remains to identify $V\rho$ with the equilibrium weight. In the present smooth positive-curvature regime, the equilibrium measure is
\[
\mu_\eq^\phi=\frac{1}{V}\frac{\omega^d}{d!}=\frac{1}{V}\Omega .
\]
Since $d\mu=\rho\Omega$, we obtain
\[
w^\phi_{\mu}=\frac{d\mu}{d\mu^\phi_{\eq}}=V\rho .
\]
Consequently,
\[
\|r_k-w^\phi_{\mu}\|_{\mathscr C^1(\M)}=O(k^{-1}),
\]
as claimed.
\end{proof}

\subsection{Laplace transform of linear statistics}

DPPs are known to have tractable Laplace transforms of linear statistics, and \cite{Ber7} has used this Laplace transform to obtain central limit theorems for $\d\mathcal{P}_{(\phi,\mu)}$.
For any continuous $\psi: \M\rightarrow \mathbb{R}$, define the Laplace transform of the linear statistics $\sum_i\psi(X_i)$ as
\begin{equation}\label{eq:Laplace_Transform_LS}
Z_{(\phi,\mu)}[t;\psi] = \E_{(\phi,\mu)}\left[e^{-t\sum_{i=1}^N\psi(X_i)}\right].
\end{equation}
It can also represent the partition function of a Bergman ensemble put in an environment represented by a new reference measure $e^{-t\psi}\d\mu$. The expectation and variance of the linear statistic of the DPP can be expressed in terms of the logarithmic derivative of $Z_{(\phi,\mu)}[t;\psi]$.

\begin{proposition}[\citealp{Ber7}]\label{prop:exp_var}
The function $t\mapsto \log Z_{(\phi,\mu)}[t;\psi]$ is at least twice differentiable with respect to $t$, and satisfies
\begin{equation}
\frac{\d}{\d t}\log Z_{(\phi,\mu)}[t;\psi] = -\E_{(\phi+t\psi,\mu)}\left[\sum_{i=1}^N\psi(X_i)\right] = -\int_\M \psi(x)B_{(\phi+t\psi,\mu)}(x,x)\d\mu(x),
\end{equation}
\begin{align}
\frac{\d^2}{\d t^2}\log Z_{(\phi,\mu)}[t;\psi] &=  \Var_{(\phi+t\psi,\mu)}\left[\sum_{i=1}^N\psi(X_i)\right]\nonumber\\
&=  \frac12\int_{\M^2} (\psi(x)-\psi(y))^2\vert B_{(\phi+t\psi,\mu)}(x,y)\vert_{\phi+t\psi}^2\d\mu^{\otimes 2}(x,y).
\end{align}
\end{proposition}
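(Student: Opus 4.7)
The plan is to reduce the computation of $K_\phi^\psi(t)$ to a finite-dimensional determinant, to which ordinary matrix calculus applies. The key observation is that, locally, $|\det(s_i(x_j))|_\phi^2 = |\det(f_i(x_j))|^2 \prod_j e^{-\phi(x_j)}$, so multiplying the density in \eqref{eq:P_phi} by $e^{-t\sum_j\psi(x_j)}$ is exactly the same as switching to the local weight $\phi+t\psi$ while keeping the basis $(s_i)$ fixed. Applying the generalized Cauchy--Binet identity to the basis $(s_i)$ --- which is orthonormal for $\phi$ but \emph{not} for $\phi+t\psi$ --- gives
\[
\E_\phi\bigl[\e^{-t\sum_i\psi(X_i)}\bigr] \;=\; \frac{1}{N!}\int_{\M^N}|\det(s_i(x_j))|_{\phi+t\psi}^2\,\d\mu^{\otimes N} \;=\; \det G(t),
\]
where $G(t)_{ij}=\langle s_i,s_j\rangle_{(\phi+t\psi,\mu)}=\int_\M h_x(s_i,s_j)\e^{-t\psi(x)}\d\mu(x)$. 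Since $\psi\geq 0$, the integrand is dominated, so $G$ is as smooth in $t$ as we need and $G(0)=I_N$; smoothness and invertibility near $t\geq 0$ give the twice-differentiability of $K_\phi^\psi=\log\det G$.

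\textbf{First derivative.} Differentiating under the integral yields $G'(t)_{ij}=-\int\psi(x)h_x(s_i,s_j)\e^{-t\psi(x)}\d\mu(x)$, so by Jacobi's formula
\[
\frac{\d}{\d t}K_\phi^\psi(t) \;=\; \tr\bigl(G(t)^{-1}G'(t)\bigr).
\]
To connect this trace to $B_{(\phi+t\psi,\mu)}$, I expand an orthonormal basis $(\hat s_i)$ for $(\phi+t\psi,\mu)$ in the fixed basis: $\hat s_i=\sum_j A_{ij}s_j$ with $A^*A=G(t)^{-1}$. Then
\[
B_{(\phi+t\psi,\mu)}(x,x)\;=\;\sum_i h_x^{\phi+t\psi}(\hat s_i,\hat s_i)\;=\;\sum_{j,l}G(t)^{-1}_{lj}\,h_x^{\phi+t\psi}(s_j,s_l),
\]
and integrating against $\psi\,\d\mu$ recovers $-\tr(G(t)^{-1}G'(t))$. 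Combining this with the determinantal identity $\E_{\phi+t\psi}[\sum_i\psi(X_i)]=\int\psi(x)B_{(\phi+t\psi,\mu)}(x,x)\d\mu(x)$ (the $1$-point correlation function of the Bergman ensemble) gives the claimed formula for $\tfrac{\d}{\d t}K_\phi^\psi$.

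\textbf{Second derivative.} Rather than differentiating the trace identity again, I would exploit the standard fact that the second derivative of a log-Laplace transform is the variance under the exponentially tilted measure. Concretely, the tilted law $Z(t)^{-1}\e^{-t\sum_i\psi(X_i)}\d\mathcal{P}_\phi$ coincides with $\d\mathcal{P}_{\phi+t\psi}$, by the same change-of-weight identity used above. Hence $\tfrac{\d^2}{\d t^2}K_\phi^\psi(t)=\Var_{\phi+t\psi}[\sum_i\psi(X_i)]$. Using $\rho_1(x)=B(x,x)$ and $\rho_2(x,y)=B(x,x)B(y,y)-|B(x,y)|_{\phi+t\psi}^2$ (where $B:=B_{(\phi+t\psi,\mu)}$), the variance expands as
\[
\Var\Bigl[\textstyle\sum_i\psi(X_i)\Bigr]=\int_\M\psi^2(x)B(x,x)\d\mu(x)-\int_{\M^2}\psi(x)\psi(y)|B(x,y)|_{\phi+t\psi}^2\d\mu^{\otimes 2}(x,y).
\]
The reproducing property of the Bergman kernel implies $\int_\M|B(x,y)|_{\phi+t\psi}^2\d\mu(y)=B(x,x)$, so I can rewrite $\int\psi^2 B(x,x)\d\mu$ as the symmetric double integral $\tfrac12\int(\psi(x)^2+\psi(y)^2)|B(x,y)|_{\phi+t\psi}^2\d\mu^{\otimes 2}$, and combining yields the $(\psi(x)-\psi(y))^2$ formula.

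\textbf{Main obstacle.} The computational heart is step three, identifying $-\tr(G^{-1}G')$ with the integral of $\psi\,B_{(\phi+t\psi,\mu)}(\cdot,\cdot)$; the bookkeeping with the two different inner products (fixed basis $(s_i)$ vs.\ tilted basis $(\hat s_i)$) is where one must be careful with the Hermitian fiber pairings and the factor $A^*A=G^{-1}$. The rest is matrix calculus, the standard cumulant-generating-function identity for the tilted variance, and the reproducing-kernel identity --- each of which is routine once the Cauchy--Binet reduction is in place. A minor technical point is justifying differentiation under the integral and smoothness of $t\mapsto G(t)$ near $t\geq 0$, which follows from continuity of $\psi$, compactness of $\M$, and nonnegativity of $\psi$ (ensuring uniform domination).
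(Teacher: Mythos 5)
Your proof is correct. Note that the paper does not prove this proposition at all: it is quoted from \citet{Ber7}, so there is no in-text argument to compare against; your route (Cauchy--Binet reduction of the Laplace transform to $\log\det G(t)$, Jacobi's formula plus the change-of-basis identity $G^{-1}=A^*A$ for the first derivative, exponential tilting plus the correlation functions $\rho_1,\rho_2$ and the reproducing property for the second) is precisely the standard argument behind Berman's statement, and all the individual computations — $|\det(s_i(x_j))|^2_{\phi}e^{-t\sum_j\psi(x_j)}=|\det(s_i(x_j))|^2_{\phi+t\psi}$, the cancellation in the variance expansion, and the symmetrization giving the $(\psi(x)-\psi(y))^2$ kernel — check out. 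The only caveat is the differentiability discussion: the proposition is stated for $\psi$ nonnegative and measurable, whereas your domination argument as written uses continuity of $\psi$; for unbounded $\psi$ one should either restrict to $t>0$ (where $\psi^m e^{-t\psi}$ is bounded) or note that in the paper's application $\psi$ is Lipschitz on a compact manifold, so this is harmless.
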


The next result is a control of the asymptotics of the integral in the expression of the variance in Proposition \ref{prop:exp_var}, when $\phi$ is replaced by $k\phi$ and $k\to\infty$. It is a fundamental result because it shows how the asymptotic variance emerges in the scaling limit in \cite{Ber7}.

\begin{theorem}[\citealp{Ber7}, Theorem 5.8]
  \label{thm:fluctuations_variance_0}
  Let $\M$ be a compact complex manifold of dimension $d$ endowed with a Borel measure $\mu$ associated with a continuous volume form, $L$ be a big line bundle over $\M$ endowed with a $\mathscr{C}^{1,1}$ metric $\phi$, $F$ be a line bundle endowed with a continuous metric with weight $\phi_F$, and $B_{k\phi+\phi_F}$ be the Bergman kernel of $H^0(\M,L^k\otimes F)$. If $f$ is a Lipschitz function with compact support included in the bulk, then 
  \begin{equation}
  \lim_{k\to\infty} \frac{1}{2}\iint_{\M^2} k^{1-d} |B_{k\phi+\phi_F}(x,y)|_{k\phi+\phi_F}^2(f(x)-f(y))^2\d\mu^{\otimes 2}(x,y) = \frac12\Vert \d f\Vert_{\ddc\phi}^2.
  \end{equation}
\end{theorem}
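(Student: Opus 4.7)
The strategy is to localize near the diagonal, where the Bergman kernel concentrates at the microscopic scale $k^{-1/2}$, and to recognize that after rescaling, the integrand converges to a Gaussian model whose integration over the fibre reconstructs the Dirichlet form. First I would cover the compact set $\Supp(f)\subset\{\ddc\phi>0\}$ by finitely many coordinate charts that simultaneously trivialize $L$ and $F$, and use a subordinate partition of unity to decompose the double integral. Because $(f(x)-f(y))^2$ vanishes outside a neighbourhood of $\Supp(f)$ in at least one variable, an off-diagonal Gaussian bound of the form
\[
 \bigl|B_{k\phi+\phi_F}(x,y)\bigr|_{k\phi+\phi_F}^{2} \leq C k^{2d}\, \e^{-c\sqrt{k}\, d_\M(x,y)}
\]
allows one to discard all terms in which either $x$ or $y$ lies outside a small tubular neighbourhood of the support, as well as the off-diagonal cross terms between different charts, in a $o(k^{d-1})$ remainder.

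On each chart, I would perform the microscopic change of variable $y = x + u/\sqrt{k}$ with $u\in\C^d$. The Lipschitz assumption on $f$ gives pointwise $k\bigl(f(x)-f(x+u/\sqrt k)\bigr)^2 \to |\nabla f(x)\cdot u|^2$, together with the global bound $k(f(x)-f(y))^2 \leq \mathrm{Lip}(f)^2 |u|^2$ needed for dominated convergence. At the same scale, the near-diagonal asymptotic expansion of the Bergman kernel in the bulk, which can be derived from Berman's Hörmander-type $\bar\partial$-estimates with plurisubharmonic weights, yields
\[
 k^{-2d}\,\bigl|B_{k\phi+\phi_F}(x,\, x+u/\sqrt k)\bigr|_{k\phi+\phi_F}^{2} \longrightarrow \mathcal{K}_x(u) := \pi^{-2d}\,\det(\ddc\phi(x))^{2}\, \e^{-\langle u,u\rangle_{\ddc\phi(x)}},
\]
uniformly for $x$ in the support of the chart. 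The volume form contributes $\d\mu(y) = k^{-d}\beta(y)\,\mathrm{dLeb}(u)$; combined with the prefactor $k^{1-d}$, the scaling $k^{1-d}\cdot k^{2d}\cdot k^{-1}\cdot k^{-d}=1$ checks out and the integrand is integrable in $u$ thanks to the Gaussian decay of $\mathcal{K}_x$.

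After passing to the limit, the expression reduces to
\[
 \frac{1}{2}\int_\M \d\mu(x) \int_{\C^d} \mathcal{K}_x(u)\, |\nabla f(x)\cdot u|^2\, \mathrm{dLeb}(u).
\]
The Gaussian integral in $u$ evaluates to a constant multiple of $|\nabla f(x)|_{\ddc\phi}^{2}/\det(\ddc\phi(x))$, and the surviving factors of $\det(\ddc\phi(x))$ reconstruct the volume form $(\ddc\phi)^d$, producing exactly $\|\d f\|_{\ddc\phi}^{2}$. \textbf{Main obstacle.} Since $\phi$ is only $\mathscr{C}^{1,1}$ and $\phi_F$ only continuous, one cannot appeal to the smooth Bergman kernel expansions of Tian or Ma--Marinescu. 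The technical heart of the argument lies in establishing, by means of $\bar\partial$-solutions with singular plurisubharmonic weights and the extremal characterization \eqref{eq:ext_prop_Bergman}, both the pointwise scaling limit inside the bulk and a uniform off-diagonal Gaussian decay strong enough to justify dominated convergence on the macroscopic scale; this is precisely the step where the restriction of $f$ to the weak bulk becomes essential.
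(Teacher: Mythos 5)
Your outline is essentially the paper's route: this statement is quoted from Berman (\citealp{Ber7}, Thm.~5.8), and the proof the paper gives for its variant (Theorem~\ref{thm:fluctuations_variance}) follows exactly your plan --- split the double integral by distance scales, discard the far and intermediate regions via the off-diagonal bound $k^{-2d}|B_{k\phi+\phi_F}(x,y)|^2\leq C\e^{-\sqrt{k}\,d(x,y)/C}$, then pass to the limit on the $k^{-1/2}$-neighbourhood of the diagonal using the near-diagonal scaling limit of the kernel and the differentiability of the Lipschitz $f$, the resulting Gaussian integral reconstructing $\Vert \d f\Vert_{\ddc\phi}^2$. The only slight imprecision is that $k\bigl(f(x)-f(x+u/\sqrt{k})\bigr)^2\to|\nabla f(x)\cdot u|^2$ holds only almost everywhere (Rademacher), which is enough given your dominating bound.
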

A key ingredient for the proof of our Theorem~\ref{thm:MC_main} will be a generalization of Theorem \ref{thm:fluctuations_variance_0}, taking into account the dependence on $k$ in both $f$ and $\phi_F$, see Proposition~\ref{prop:uniform_variance} below for the precise statement.

\section{CLT for weighted Bergman linear statistics}\label{sec:CLT}

Let $\M$ be a compact complex manifold of dimension $d$, and let $L\to \M$ be a positive holomorphic line bundle endowed with a smooth Hermitian metric $h=e^{-\phi}$ with positive curvature. Let $\mu$ be a reference probability measure which has Radon--Nikodym derivative $\rho\in \mathscr{C}^1(\M)$ with respect to the reference volume form $\Omega=\omega^d/d!$, such that $\rho>0$. Denote by $V=\int_\M\Omega$ the line-bundle volume of $L$. Let $F\to \M$ be a holomorphic line bundle endowed with a $\mathscr{C}^1$ Hermitian metric $e^{-\psi}$. Let $(X_1,\ldots,X_{N_k})$ be the Bergman ensemble for the weighted measure $(k\phi+\psi,\mu)$. For a function $u:\M\to\R$, we define the linear statistic
\[
\Lambda_k(u):=\sum_{i=1}^{N_k} u(X_i).
\]
Let $(u_k)$ be a sequence of functions on $\M$, and consider perturbed weights
\[
\psi_{k,t}:=\psi + tu_k, \qquad t\in I,
\]

\begin{proposition}
\label{prop:uniform_variance}
Let $\M$ be a compact complex manifold of complex dimension $d$, and let
$L\to\M$ be a positive holomorphic line bundle endowed with a smooth Hermitian
metric $e^{-\phi}$ such that $\ddc\phi>0$ on $\M$.
Let $\mu$ be a probability measure with $\mathscr{C}^1$ density, bounded away from $0$, with respect to the volume form $(\ddc\phi)^d/d!$. Let $F\to \M$ be a holomorphic line bundle endowed with a fixed $\mathscr{C}^1$ Hermitian metric $e^{-\psi}$. Let $(u_k)_{k\ge1}\subset \mathscr{C}^1(\M)$ such that $u_k \to u$ in $\mathscr{C}^1(\M)$ and $\sup_{k\ge1}\|u_k\|_{\mathscr{C}^1(\M)}<\infty.$ Let $I\subset \mathbb R$ be a bounded interval and, for $s\in I$, define $\d\mu_{k,s}:=e^{-s u_k}\d\mu.$ For each $k$ and $s$, let $B_{k,s}(x,y):=B_{(k\phi+\psi,\mu_{k,s})}(x,y)$ be the Bergman kernel of the weighted pair $(k\phi+\psi,\mu_{k,s})$.
Then
\[
\sup_{s\in I}
\left|
N_k^{-1+1/d}\Var_{k,\psi,\mu_{k,s}}\big(\Lambda_k(u_k)\big)
-\frac12\mathcal{E}_\phi(u)
\right|
\longrightarrow 0 .
\]
\end{proposition}

\begin{proof}
Set
\[
V_k(s):=N_k^{-1+1/d}\Var_{k,\psi,\mu_{k,s}}\big(\Lambda_k(u_k)\big),\qquad \widetilde{V}_k(s):=k^{1-d}\Var_{k,\psi,\mu_{k,s}}(\Lambda_k(u_k)).
\]
By~\eqref{eq:volume},
\[
N_k^{-1+1/d}=V^{-1+1/d}k^{1-d}(1+O(k^{-1})).
\]
Hence
\[
V_k(s)=V^{-1+1/d}(1+O(k^{-1}))\widetilde V_k(s).
\]
It is therefore enough to prove that
\[
\sup_{s\in I}\left|\widetilde V_k(s)-\frac12\|\d u\|^2_{\ddc\phi}\right|\longrightarrow 0 .
\]
By Proposition~\ref{prop:exp_var},
\begin{equation}\label{4.1}
\widetilde V_k(s)=\frac12k^{1-d}
\iint_{\M\times \M}
(u_k(x)-u_k(y))^2
|B_{k,s}(x,y)|_{k\phi+\psi}^2
\d\mu_{k,s}^{\otimes 2}(x,y).
\end{equation}

We now rewrite the same Bergman ensemble by moving the perturbation from the measure to the auxiliary metric:
\[
(k\phi+\psi,\mu_{k,s})\equiv(k\phi+\psi+s u_k,\mu).
\]
Hence $B_{k,s}$ is also the Bergman kernel attached to the data $(L^k\otimes F,k\phi+\psi+s u_k,\mu).$ Since $\ddc\phi>0$ on all of $\M$, the bulk is all of $\M$. The proof is now the same as the proof of Theorem~5.8 in \cite{Ber7}, and we only indicate the points where uniformity in $s$ enters.

First, because $I$ is bounded and $\sup_k\|u_k\|_{\mathscr{C}^1}<\infty$, the family of
auxiliary weights $\psi_{k,s}$ satisfies
\begin{equation}
\label{4.2}
\sup_{k\ge1}\sup_{s\in I}\|\psi_{k,s}-\psi\|_{L^\infty(\M)}<\infty
\end{equation}
and, for every fixed $R>0$,
\begin{equation}\label{4.3}
\sup_{s\in I}\sup_{x\in \M}\sup_{d(x,y)\le R/\sqrt{k}}
|\psi_{k,s}(x)-\psi_{k,s}(y)|
\longrightarrow 0 .
\end{equation}
Indeed,
\[
|\psi_{k,s}(x)-\psi_{k,s}(y)|
\le |s|\|\d u_k\|_{L^\infty}d(x,y),
\]
and the right-hand side is $O(k^{-1/2})$ uniformly in $k$ and $s$.

Now the only Bergman-kernel inputs used in Berman's proof of Theorem~5.8 are:

\smallskip

\noindent
(i) the local scaling asymptotics on $k^{-1/2}$-balls, namely Theorem~1.1
in \cite{Ber7}, and

\smallskip

\noindent
(ii) the off-diagonal decay estimate, namely Theorem~1.3 in \cite{Ber7}.

\smallskip

By Remark~1.7 in \cite{Ber7}, these results remain valid in the more
general setting $(L^k\otimes F,k\phi+\phi_F)$ with an auxiliary bundle $F$. While Theorem 1.1 and Theorem 1.3 in \cite{Ber7} are stated for a fixed auxiliary weight, inspection of their proofs shows that the constants involved depend only on (i) a uniform $L^\infty$ bound on $\phi_F$, and (ii) the fact that $\phi_F$ is asymptotically constant on $k^{-\frac12}$-balls. In the present setting, both properties hold uniformly in $s\in I$ by \eqref{4.2}–\eqref{4.3}. Therefore, the local scaling asymptotics and off-diagonal estimates (i) and (ii) apply uniformly for the family $\psi_{k,s}$. Therefore the decomposition argument in the proof of Theorem~5.8 may be repeated
verbatim, uniformly in $s$. More precisely, decompose the integral in \eqref{4.1} into the three regions
\[
A_k:=\{d(x,y)\ge1\},\qquad
B_{k,R}:=\{R/\sqrt{k}\le d(x,y)\le1\},\qquad
C_{k,R}:=\{d(x,y)\le R/\sqrt{k}\}.
\]

On $A_k$, the contribution tends to $0$ uniformly in $s$ by the off-diagonal
decay (ii).

On $B_{k,R}$, using the uniform Lipschitz bound on $u_k$ and (ii), the contribution is bounded exactly as in the proof of
Theorem~5.8; hence
\begin{equation}\label{4.4}
\lim_{R\to\infty}\limsup_{k\to\infty}\sup_{s\in I}
\Bigl|\text{contribution of }B_{k,R}\Bigr|=0.
\end{equation}

On $C_{k,R}$, one uses (i) exactly as in Berman's proof.
Since $u_k\to u$ in $\mathscr{C}^1(\M)$, for every fixed $R>0$ we have
\begin{equation}\label{4.5}
\sup_{x\in M}\sup_{|z|\le R}
\left|
\sqrt{k}\bigl(u_k(\exp_x(z/\sqrt{k}))-u_k(x)\bigr)-\d u_x(z)
\right|
\longrightarrow 0,
\end{equation}
and therefore the local contribution converges, uniformly in $s$, to the same
Bargmann--Fock integral as in Theorem~5.8. Consequently,
\begin{equation}\label{4.6}
\lim_{k\to\infty}\sup_{s\in I}
\left|
\text{contribution of }C_{k,R}-A(R)
\right|=0,
\end{equation}
where $A(R)$ is the truncated Gaussian integral appearing in Berman's proof.
Letting $R\to\infty$ and using the final computation in the proof of
Theorem~5.8 gives
\begin{equation}\label{4.7}
A(R)\uparrow \frac12\|\d u\|_{\ddc\phi}^2 .
\end{equation}
Combining \eqref{4.4}, \eqref{4.6}, and \eqref{4.7} yields
\[
\sup_{s\in I}
\left|
\widetilde V_k(s)-\frac12\|\d u\|_{\ddc\phi}^2
\right|
\longrightarrow 0,
\]
which implies the desired result.
\end{proof}

We now state the main result of this section.

\begin{theorem}[CLT for linear statistics]\label{thm:CLT_linear}
Under the assumptions of Proposition~\ref{prop:uniform_variance}, let $(X_1,\ldots,X_{N_k})$ be the Bergman ensemble associated with $(k\phi+\psi,\mu)$. Let $(u_k)$ be a sequence of real-valued functions on $\M$ such that $u_k \to u$ in $\mathscr{C}^1(\M)$ for some $u\in \mathscr{C}^1(\M)$ and $\sup_{k\geq 1}\Vert u_k\Vert_{\mathscr{C}^1(\M)}<\infty$. Then
\[
N_k^{-\frac12+\frac{1}{2d}}\sum_{i=1}^{N_k}
\Bigl(u_k(X_i)-\mathbb{E}[u_k(X_1)]\Bigr)\build{\longrightarrow}{k\to\infty}{(d)}
\mathcal{N} \left(0,\frac12\mathcal{E}_\phi(u)\right).
\]
\end{theorem}

\begin{proof}
Set
\[
Y_k := N_k^{-1/2+1/(2d)}\Big(\Lambda_k(u_k)-\mathbb E_{k,\psi,\mu}[\Lambda_k(u_k)]\Big).
\]
We must prove that $Y_k$ converges in distribution to $\mathcal N\left(0,\frac12\mathcal{E}_\phi(u)\right).$ For $t\in\mathbb R$, set $F_k(t) := \log \mathbb E_{k,\psi,\mu}\left[e^{-tY_k}\right]$. Then, by successive differentiations, we have $F_k(0)=0$, $F_k'(0)=0$, and
\[
F_k''(t)=N_k^{-1+1/d}
\Var_{k,\psi+t N_k^{-1/2+1/(2d)} u_k,\mu} \big(\Lambda_k(u_k)\big).
\]
As we have $N_k^{-1/2+1/(2d)}\to0$, for $t$ in compact sets the parameter $s = t N_k^{-1/2+1/(2d)}$ remains in a bounded interval. Hence Proposition~\ref{prop:uniform_variance} yields
\[
F_k''(t)\longrightarrow \frac12\mathcal{E}_\phi(u)
\]
uniformly for $t$ in compact sets. Finally, since $F_k(0)=F_k'(0)=0$, we have
\[
F_k(t)=\int_0^t (t-\tau)F_k''(\tau)d\tau\longrightarrow\frac{t^2}{4}\mathcal{E}_\phi(u),
\]
and the Laplace transforms of linear statistics converge to the Laplace transform of the desired Gaussian distribution. The result follows by the standard continuity theorem for moment-generating functions.
\end{proof}

\section{Proof of the main results}\label{sec:proofs}

We are now in position to prove the main results of this paper.

\begin{proof}[Proof of Theorem~\ref{thm:MC_main}]
Set $r_k$ as in Lemma~\ref{lem:ratio_C1}, and
\[
f_k:=fr_k,
\quad
f_\eq:=fw^\phi_{\mu},\quad Q_k(f)=\frac{1}{N_k}\sum_{i=1}^{N_k}f_k(X_i)=\sum_{i=1}^{N_k}\frac{f(X_i)}{B_{(k\phi+\psi,\mu)}(X_i,X_i)}. 
\]

We have
\[
\E[Q_k(f)]=\frac{1}{N_k}\E\left[\sum_i f_k(X_i)\right]=\frac{1}{N_k}\int_\M f_k(x)B_{(k\phi+\psi,\mu)}(x,x)\d\mu(x)=\int_\M f(x)\d\mu(x).
\]
Then,
\[
N_k^{-\frac12+\frac{1}{2d}}\left(Q_k(f)-\int_\M f(x)\d\mu(x)\right)=N_k^{-\frac12+\frac{1}{2d}}\sum_{i=1}^{N_k}
\Bigl(f_k(X_i)-\mathbb{E}[f_k(X_1)]\Bigr).
\]
We have $f_k-f_\eq=f(r_k-w^\phi_{\mu}),$ and since multiplication by the fixed $\mathscr{C}^1$ function $f$ is bounded on $\mathscr{C}^1(\M)$, we deduce from Lemma~\ref{lem:ratio_C1} that
\[
\|f_k-f_\eq\|_{\mathscr{C}^1(\M)}=O(k^{-1}).
\]
Thus, the sequence of weights $(f_k)$ satisfies the assumptions of Theorem~\ref{thm:CLT_linear}, and we get
\[
N_k^{-\frac12+\frac{1}{2d}}\sum_{i=1}^{N_k}
\Bigl(f_k(X_i)-\mathbb{E}[f_k(X_1)]\Bigr)
\build{\longrightarrow}{k\to\infty}{(d)}\mathcal{N} \left(0,\frac12\mathcal{E}_\phi(f_\eq)\right),
\]
which concludes the proof.
\end{proof}

\begin{proof}[Proof of Corollary \ref{cor:universality}]
Set $\d\mu_\psi=e^{-\psi}\d\mu$. Since $\mu$ satisfies the assumptions of
Theorem~\ref{thm:MC_main} and $\psi$ is smooth, the measure $\mu_\psi$ satisfies the same assumptions. Apply Theorem~\ref{thm:MC_main} to the weighted measure $(\phi,\mu_\psi)$, with trivial auxiliary bundle, and to the test function $g:=fe^\psi .$ The corresponding Bergman estimator is
\[
\sum_{i=1}^{N_k}\frac{g(X_i)}{B_{(k\phi,\mu_\psi)}(X_i,X_i)}=\sum_{i=1}^{N_k}\frac{f(X_i)e^{\psi(X_i)}}{B_{(k\phi,e^{-\psi}\mu)}(X_i,X_i)}.
\]
It is unbiased for
\[
\int_\M g\d\mu_\psi=\int_\M f e^\psi e^{-\psi}\d\mu=\int_\M f\d\mu,
\]
and we have a CLT with limiting variance $\frac12 \mathcal E_\phi(gw_{\mu_\psi}^\phi).$ Since
\[
w_{\mu_\psi}^\phi=\frac{\d\mu_\psi}{\d\mu_{\eq}^\phi}=e^{-\psi}w_\mu^\phi,
\]
we have $gw_{\mu_\psi}^\phi=fe^\psi e^{-\psi}w_\mu^\phi=fw_\mu^\phi$, and the claimed CLT is proved.
\end{proof}

\section{A detailed example: the Riemann sphere}\label{sec:sphere}

In order to get a concrete idea of how to use our theoretical results, we shall implement them in the simplest compact complex manifold, which is the Riemann sphere. We will first describe the different objects (Bergman kernel, Bergman ensemble) in order to make Theorem~\ref{thm:MC_main} more explicit, then we will provide numerical experiments that illustrate the speed of convergence compared to several other methods.

\subsection{Complex structure and Bergman kernel}

We identify the Riemann sphere with the complex projective line $S^2 \simeq \mathbb{CP}^1.$ In concrete terms, we use the stereographic coordinate
\[
\zeta=\frac{x-iy}{1+z}
\]
on the chart $U_0=S^2\setminus\{(0,0,-1)\}.$ Thus $U_0$ is the sphere without the South pole, which corresponds to the point at infinity in this coordinate. On the second chart $U_1=S^2\setminus\{(0,0,1)\},$ we use
\[
\eta=\frac{x+iy}{1-z}.
\]
On $U_0\cap U_1$, the transition map is
\[
\eta=\frac1\zeta,
\]
which is holomorphic. These two charts therefore endow $S^2$ with its usual complex structure.

We equip $\mathbb{CP}^1$ with the normalized Fubini--Study form. In the coordinate $\zeta$, it is given by
\[
\omega_{\mathrm{FS}}=\frac{i\d\zeta\wedge \d\bar\zeta}{2\pi(1+|\zeta|^2)^2}.
\]
It has total mass one, and we denote the corresponding probability measure by $\d\mathrm{vol}_{S^2}$. Equivalently,
\[
\d\mathrm{vol}_{S^2}(\zeta)=\frac{\d m(\zeta)}{\pi(1+|\zeta|^2)^2},
\]
where
\[
\d m(\zeta)=\frac{i}{2}\d\zeta\wedge \d\bar\zeta
\]
is the Lebesgue measure on $\mathbb C$.

Let $L=\mathcal O(1)$ be the hyperplane line bundle on $\mathbb{CP}^1$, endowed with the Fubini--Study Hermitian metric. In the affine chart $U_0$, this metric has local weight
\[
\phi(\zeta)=\log(1+|\zeta|^2),
\]
and $\ddc\phi=\omega_{\mathrm{FS}}.$ The holomorphic sections of $L^k=\mathcal O(k)$ are represented in the coordinate $\zeta$ by polynomials of degree at most $k$. With respect to the inner product induced by $(k\phi,d\mathrm{vol}_{S^2})$, an orthonormal basis is
\[
s_\ell(\zeta)=\sqrt{k+1}\binom{k}{\ell}^{1/2}\zeta^\ell,\qquad 0\leq \ell\leq k.
\]
In particular, $N_k=\dim H^0(\mathbb{CP}^1,\mathcal O(k))=k+1.$ The associated weighted Bergman kernel is therefore
\[
B_{(k\phi,\d\mathrm{vol}_{S^2})}(\zeta,\xi)=(k+1)\frac{(1+\zeta\bar\xi)^k}{(1+|\zeta|^2)^{k/2}(1+|\xi|^2)^{k/2}}.
\]
In particular, it is constant on the diagonal:
\[
B_{(k\phi,\d\mathrm{vol}_{S^2})}(\zeta,\zeta)=k+1=N_k.
\]

\begin{definition}
The \emph{spherical ensemble} of size $N_k=k+1$ is the Bergman ensemble associated with
\[
(\mathbb{CP}^1,\mathcal O(k),k\phi,\d\mathrm{vol}_{S^2}).
\]
Equivalently, in the stereographic coordinate $\zeta$, it is the determinantal point process on $\mathbb C\cup\{\infty\}$ whose joint density with respect to $\d m^{\otimes N_k}$ is
\[
\frac1{Z_{N_k}}\prod_{1\leq i<j\leq N_k}|\zeta_i-\zeta_j|^2\prod_{i=1}^{N_k}\frac{1}{(1+|\zeta_i|^2)^{N_k+1}}.
\]
\end{definition}

This is the classical spherical ensemble. It is also known to have the same distribution as the eigenvalues of $AB^{-1}$, where $A$ and $B$ are independent standard complex Gaussian matrices of size $N_k\times N_k$ (see \cite{Kri06}).

Since the diagonal Bergman kernel is constant, the estimator of
\[
\int_{S^2} f\d\mathrm{vol}_{S^2}
\]
takes the particularly simple form
\[
\frac1{N_k}\sum_{i=1}^{N_k} f(X_i).
\]
Equivalently, in stereographic coordinates,
\[
\int_{S^2}f(x)\d\mathrm{vol}_{S^2}(x)=\int_{\mathbb C} f\circ\phi_0^{-1}(\zeta)\frac{\d m(\zeta)}{\pi(1+|\zeta|^2)^2},
\]
and the Bergman estimator is
\[
\frac1{N_k}\sum_{i=1}^{N_k} f(X_i).
\]

More generally, if the target measure is
\[
\rho\d\mathrm{vol}_{S^2},\qquad\rho\in C^\infty(S^2),\quad \rho>0,
\]
then the same spherical ensemble can be used with the estimator
\[
\frac1{N_k}\sum_{i=1}^{N_k} f(X_i)\rho(X_i).
\]
The fluctuation scale is unchanged, but the limiting variance is obtained by applying the Dirichlet energy to the reweighted integrand $f\rho$.

\subsection{Numerical experiments}

We shall now proceed to a comparison of our Monte Carlo method, which reduces to using the empirical measure of the spherical ensemble (defined in Definition~\ref{def:spherical_ensemble}) as a Monte Carlo estimator, with a few other estimators. 
We consider a standard Monte Carlo estimator with an i.i.d. uniform sample, a Monte Carlo estimator based on a DPP in $[-1,1]^2$ mapped to the sphere, and a randomized Quasi Monte Carlo estimator.
The latter two are now introduced in more detail, before showing the experimental results.

\subsubsection{Legendre DPP}

The \emph{Jacobi measure} of parameters $\alpha_1,\beta_1,\ldots,\alpha_d,\beta_d>-1$ is the measure on $(-1,1)^d$ given by
\[
\d\mu_{\alpha,\beta}(x_1,\ldots,x_d) = \prod_{j=1}^d(1-x_j)^{\alpha_j}(1+x_j)^{\beta_j}\d x_j.
\]
The corresponding orthonormal polynomials are the so-called multivariate \emph{Jacobi polynomials}; see e.g. \citep{DX}. 
It has been shown by \cite[Theorem 2.2]{BH} that, using a suitable ordering $(p_{k})$ of these multivariate Jacobi polynomials, the projection determinantal point process with kernel $\sum_{k=1}^N p_k(x)p_k(y)$ leads to an diagonal-reweighted estimator with a fast central limit theorem, with a rate that is intermediate between the classical Monte Carlo rate and the rate in Theorem~\ref{thm:MC_main}.
An interesting fact is that integration on $\Sbb^2$ with respect to the uniform measure boils down to an integration on $(-1,1)^2$ with respect to the uniform measure, which is actually the Jacobi measure of parameters $(0,0)$, as explained in the following proposition.

\begin{proposition}\label{prop:Jacobi_int}
For any $f:\Sbb^2\to\R$ measurable and bounded,
\begin{equation}
\int_{\Sbb^2} f(x,y,z)\d\vol_{\Sbb^2}(x,y,z) = \frac{1}{4}\int_{(-1,1)^2}f\circ\Phi(x,y) \d x\d y,
\end{equation}
where $\Phi:[-1,1]^2\to\Sbb^2$ is the function defined by
\[
\Phi(x,y) = (\sqrt{1-x^2}\cos(\pi(y+1)),\sqrt{1-x^2}\sin(\pi(y+1)),x).
\]
\end{proposition}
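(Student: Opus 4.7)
The plan is to recognize $\Phi$ as a rescaled Archimedean (cylindrical) parameterization of the unit sphere and to invoke Archimedes' hat-box theorem. Writing $\Phi(x,y)=(X,Y,Z)$, we have $Z=x$ playing the role of the height and $\theta:=\pi(y+1)\in[0,2\pi]$ playing the role of the azimuthal angle; a direct verification gives $X^2+Y^2+Z^2=1$, so $\Phi$ indeed lands in $\Sbb^2$ and is a bijection up to the measure-zero set $\{x=\pm 1\}\cup\{y=\pm 1\}$ where either the poles or the $\theta=0\equiv 2\pi$ seam are collapsed.

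Archimedes' theorem states that the Euclidean surface measure on $\Sbb^2$ pulls back to $\d Z\wedge\d\theta$ in the coordinates $(Z,\theta)$, so that the total area is $4\pi$. Substituting $\theta=\pi(y+1)$ yields $\d\theta=\pi\,\d y$, hence the pullback of the Euclidean surface measure to $[-1,1]^2$ is $\pi\,\d x\,\d y$. Since $\d\vol_{\Sbb^2}$ is the probability measure obtained by normalizing the Euclidean surface measure by its total mass $4\pi$, the pullback of $\d\vol_{\Sbb^2}$ by $\Phi$ is $\tfrac{\pi}{4\pi}\,\d x\,\d y=\tfrac{1}{4}\,\d x\,\d y$, which gives the claimed identity after a standard change-of-variables argument for the integral of $f$.

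As a self-contained consistency check independent of Archimedes, one may also derive the identity directly from the explicit local expression $\d\vol_{\Sbb^2}(\zeta)=\d m(\zeta)/(\pi(1+\vert\zeta\vert^2)^2)$ stated earlier: composing $\Phi$ with the stereographic chart $\varphi_0$ gives $\zeta=r\,e^{\i\pi(y+1)}$ with $r=\sqrt{(1+x)/(1-x)}$, and the Jacobian of $(x,y)\mapsto\zeta$ combined with $1+\vert\zeta\vert^2=2/(1-x)$ produces the factor $\tfrac{1}{4}$ after cancellation. There is no substantive obstacle; the only point requiring mild care is the behavior of $\Phi$ on the measure-zero boundary set (the poles and the azimuthal seam), which is irrelevant for the integral.
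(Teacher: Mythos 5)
Your proposal is correct. The primary route is, in substance, the same argument as the paper's: the paper goes through spherical coordinates $\Phi_1(\theta,\phi)=(\cos\theta\sin\phi,\sin\theta\sin\phi,\cos\phi)$, uses the pullback $\tfrac{1}{4\pi}\sin\phi\,\d\theta\,\d\phi$, and then substitutes $x=\cos\phi$, $y=\theta/\pi-1$; Archimedes' hat-box theorem is exactly the statement that $\sin\phi\,\d\phi\,\d\theta=\d Z\,\d\theta$ in the height--azimuth coordinates, so you are performing the identical Jacobian computation in compressed form. Note that this route tacitly identifies $\d\vol_{\Sbb^2}$, which the paper \emph{defines} through the stereographic density $\d m(\zeta)/(\pi(1+\vert\zeta\vert^2)^2)$, with the normalized Euclidean surface measure; the paper makes the same identification implicitly when it states the $\tfrac{1}{4\pi}\sin\phi$ pullback without derivation, so this is not a gap, but it is worth being aware of. Your secondary check is the genuinely different (and, given how $\d\vol_{\Sbb^2}$ is introduced, more self-contained) route: with $\zeta=\sqrt{(1+x)/(1-x)}\,\e^{\i\pi(y+1)}$ one has $1+\vert\zeta\vert^2=2/(1-x)$ and $\d m(\zeta)=r\,\d r\,\d\theta=\pi(1-x)^{-2}\,\d x\,\d y$, so the density $\pi^{-1}(1+\vert\zeta\vert^2)^{-2}$ indeed yields $\tfrac14\,\d x\,\d y$ directly from the paper's chart-level definition, bypassing any appeal to the surface-measure interpretation. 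The measure-zero issues at the poles and the azimuthal seam are handled the same way in both arguments.
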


\begin{proof}
Let $\Phi_1:[0,2\pi]\times[0,\pi]\to\Sbb^2$ be the change of variable from spherical to Cartesian coordinates in the sphere, namely
\[
\Phi_1(\theta,\phi) = (\cos\theta\sin\phi, \sin\theta\sin\phi,\cos\phi).
\]
It is a diffeomorphism from $(0,2\pi)\times(0,\pi)$ onto its image,
whose complement is negligible in $\Sbb^2$ with respect to $\d\vol_{\Sbb^2}$. Moreover,
\[
\Phi_1^*\d\vol_{\Sbb^2}(\theta,\phi) = \frac{1}{4\pi}\sin\phi\d\theta\d\phi.
\]
Hence, for any bounded measurable function $f:\Sbb^2\to\R$,
\begin{align*}
\int_{\Sbb^2}f(x,y,z)\d\vol_{\Sbb^2}(x,y,z) & = \int_{\Phi_1((0,2\pi)\times(0,\pi)} f(x,y,z)\d\vol_{\Sbb^2}(x,y,z)\\
& = \int_{(0,2\pi)\times(0,\pi)} f\circ\Phi_1(\theta,\phi)\sin\phi\frac{\d\theta\d\phi}{4\pi}.
\end{align*}
We also introduce the diffeomorphism
\[
\Phi_2:\left\lbrace\begin{array}{ccc}
(0,2\pi)\times(0,\pi) & \to & (-1,1)^2\\
(\theta,\phi) & \mapsto & (\cos\phi, \frac{\theta}{\pi}-1).
\end{array}\right.
\]
The result follows from the fact that $\Phi=\Phi_1\circ\Phi_2^{-1}$.
\end{proof}

We conclude with a remark that in our case, where $d=2$ and $\alpha=\beta=0$, the multivariate Jacobi polynomials specialize to the Legendre polynomials.
To sample the corresponding DPP, we use the classical algorithm by \cite{HKPV06}, in the specific implementation of the Python library DPPy \citep{GBPV19} for multivariate Jacobi ensembles.

\subsubsection{Randomized spiral points}

Following \cite{RSZ} or \cite{BSSW}, given a fixed parameter $C>0$ and a fixed sample size $N$, the \emph{generalized spiral points} are the points of the sphere with spherical coordinates $(\theta_i,\phi_i)_{1\leq i\leq N}$ defined by an iterative procedure: for any $1\leq i\leq N,$ set $z_i=1-\frac{2i-1}{N}$ and
\[
\theta_i = \arccos z_i, \ \phi_i = C\sqrt{N}\theta_i.
\]
It provides a deterministic low-discrepancy family $\{(x_i,y_i,z_i), 1\leq i\leq N\}$ of points of $\Sbb^2$, which can be randomized through a random (uniform) rotation $R\in\SO(3)$. Although the QMC method using spiral points was studied in the aforementioned papers, we are unaware of any theoretical estimation of the variance of the corresponding randomized QMC.
Yet we expect it to be competitive in our low-dimensional setting.

\subsubsection{Results}

In Figure \ref{fig:samples}, we display samples of all the models we consider. 
Note that all spiral points of the sample are randomized through the same rotation, which makes them look like usual spiral points.
In the case of a Jacobi ensemble, we take a Jacobi ensemble of parameters $(0,0)$ (or equivalently, a Legendre ensemble) on $[-1,1]^2$ mapped onto the sphere through the diffeomorphism $\Phi$ introduced in Proposition \ref{prop:Jacobi_int}. 
It corresponds to the method of \cite{BH}.
As expected, we see that a cluster appears in the image of the boundary of the square $[-1,1]^2$.
\begin{figure}[!h]
    \centering
    \subfloat[i.i.d. uniform]{\includegraphics[width=0.45\textwidth]{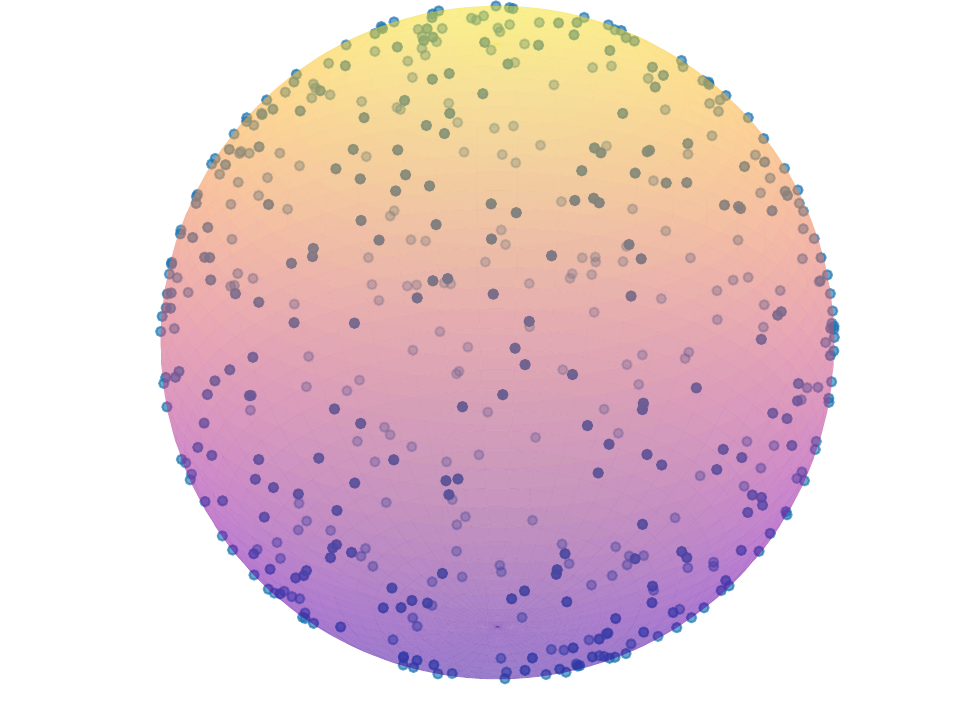}}
    \subfloat[randomized spiral]{\includegraphics[width=0.45\textwidth]{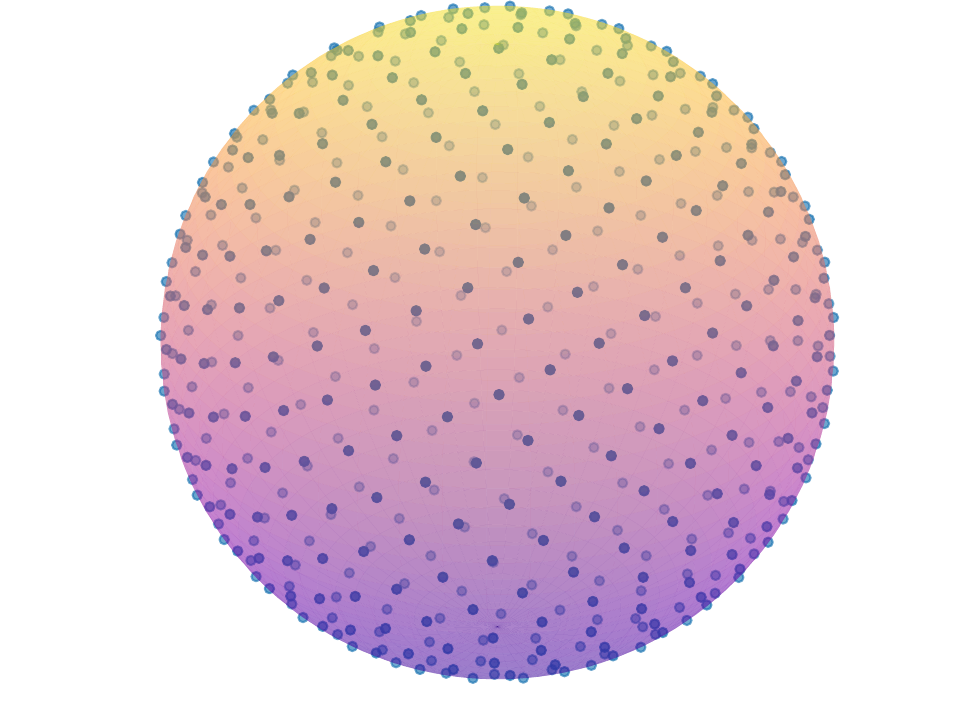}}\\
    \subfloat[spherical (or Bergman) ensemble]{\includegraphics[width=0.45\textwidth]{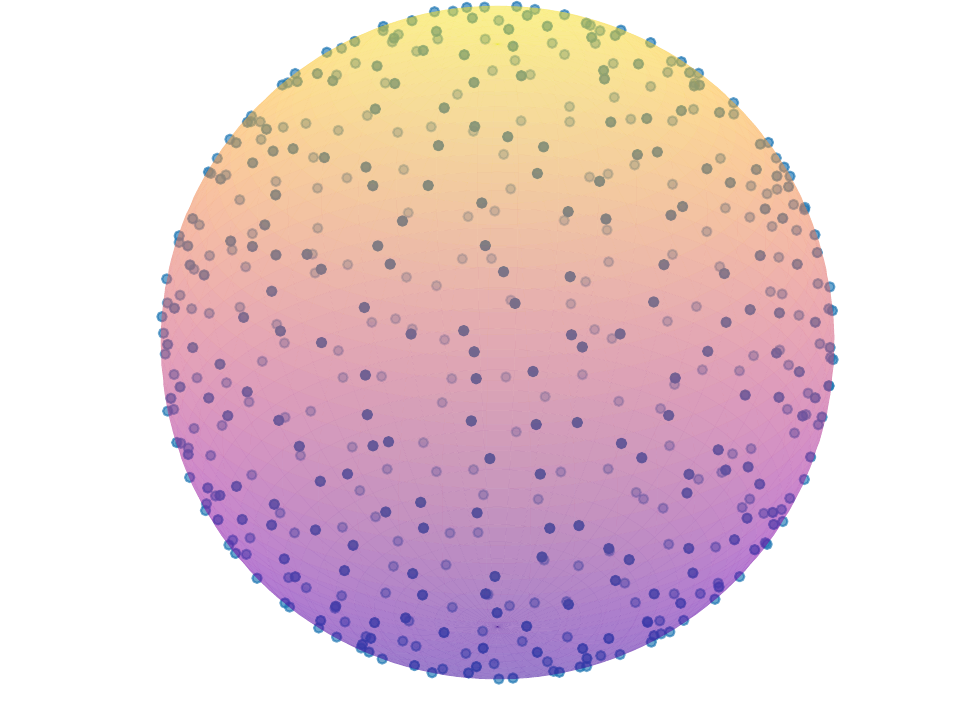}}
    \subfloat[Jacobi ensemble]{\includegraphics[width=0.45\textwidth]{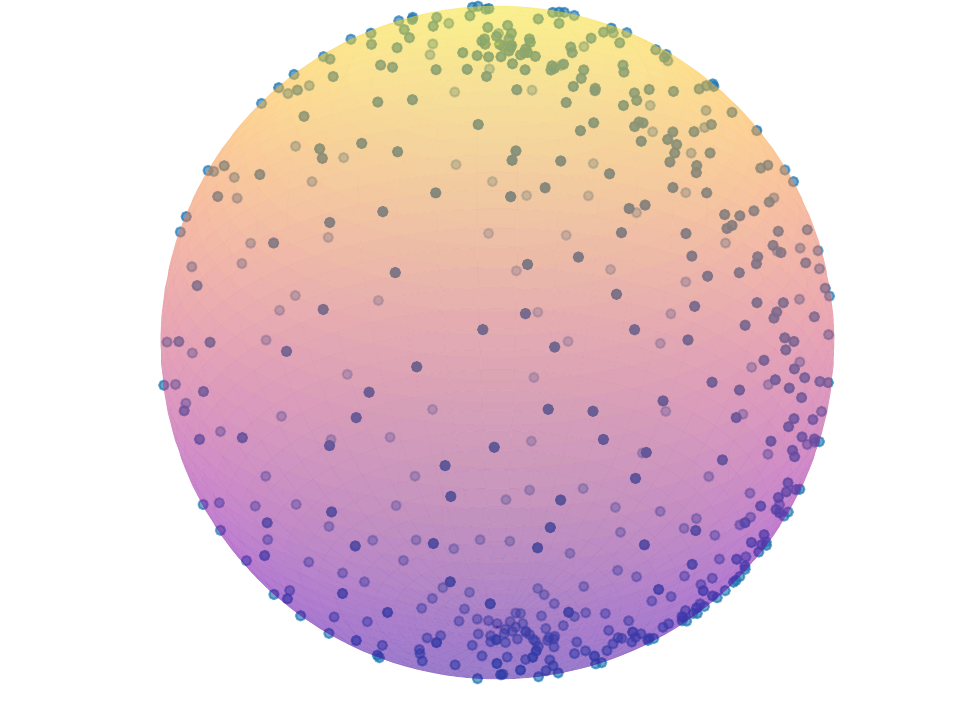}}
    \caption{Independent samples of size $N=500$ from four distributions on the sphere.}
    \label{fig:samples}
\end{figure}

Figure~\ref{fig:integrals} displays the logarithm of each sample variance as a function of $\log N$, across 200 independent repetitions for each model and each $N$. 
Keeping in mind that the variance should be proportional to $N^\alpha$ for various values of $\alpha$ depending on the model, all plots are supposed to be linear.
\begin{figure}[!h]
    \centering
    \subfloat[$f_1$]{
      \includegraphics[scale=0.45]{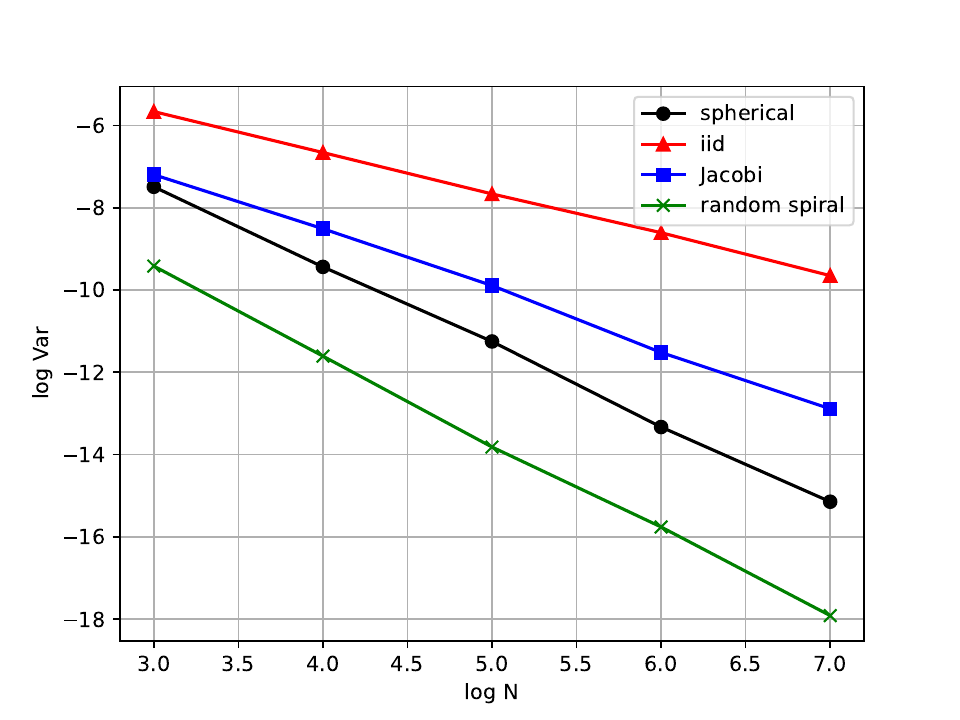}
    }
    \subfloat[$f_2$]{
      \includegraphics[scale=0.45]{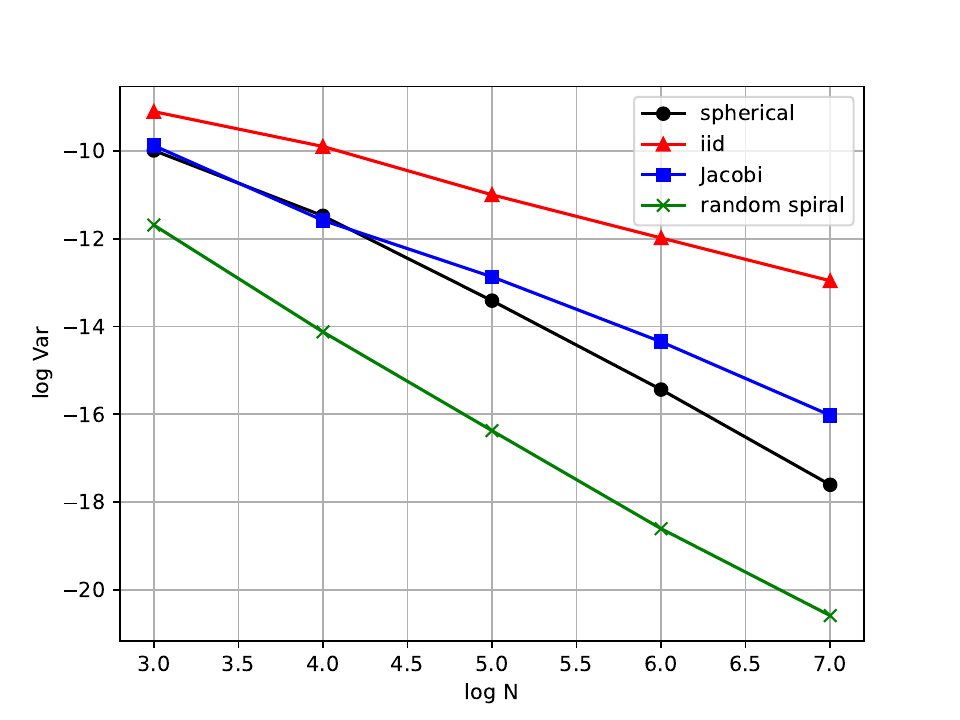}
    }
    \caption{$\log$ variance of four integral estimators wrt. the number $N$ of quadrature nodes.}
    \label{fig:integrals}
\end{figure}
We consider two functions $f_1,f_2:\R^3\to\R$ that we restrict to $\Sbb^2$, and we choose them with support in
\[
\Sbb_+^2=\{(x,y,z)\in\Sbb^2: z\geq 0\}
\]
in order to avoid numerical errors that could happen in stereographic coordinates for points which are close to the South pole (corresponding to the point at infinity). We take
\[
f_1(x,y,z) = z^2\mathbf{1}_{z\geq 0}, \quad f_2(x,y,z) = \vert x\vert^{3/2}yz^2\mathbf{1}_{z\geq 0}.
\]
Both functions are $\mathscr{C}^1$ on $\Sbb_+^2$, but $f_1$ is actually smooth on the open hemisphere $\{z>0\}$. Besides, $f_2$ is supported in the image of the open square $(-1,1)^2$ and therefore satisfies the assumptions of \cite{BH}, whereas $f_1$ is nonzero on the image of the boundary of the square. Both functions also naturally satisfy the assumptions of the CLT for the i.i.d. Monte Carlo method. In both cases, the variances of the estimators have the same rankings, and the slopes are quite close to their theoretical values. It is interesting to see that for $f_2$, on low values of $N$, the estimator for the Jacobi ensemble has a slightly lower variance than the spherical ensemble, although the former decays more slowly when $N$ grows. It does not happen for $f_1$, which is not surprising because it is an edge case for the method by \cite{BH}. We also remark that the randomized spiral points seem to provide an overall better performing estimator than all other methods, with a similar slope to the spherical ensemble, although there is no theoretical result to support that.

\section{Conclusion and perspectives}
\label{sec:discussion}

Building on Berman's seminal work that led to the central limit theorem in \citep{Ber7}, we showed that Bergman ensembles can lead to fast Monte Carlo integration on compact complex manifolds, just like multivariate orthogonal polynomial ensembles (OPEs) yield fast quadrature on compacts of the Euclidean space \citep{BH}. 
The take-home message is that, like OPEs, Bergman ensembles come up with a \emph{fast} central limit theorem for Monte Carlo integration that is also \emph{universal}, in the sense that the asymptotic variance of the central limit theorem is invariant to a suitable change of the reference measure and the kernel.
Unlike OPEs, however, the dimension in the rate of convergence is now the \emph{complex} dimension of the manifold, which has the important consequence that Bergman ensembles outperform multivariate OPEs for integration in Euclidean spaces of even real dimension.
Actually, the error rate for Bergman ensembles matches the optimal worst-case rate by \cite{Bak} for functions of class $\mathscr{C}^1$.

Future work includes the following tasks, roughly ranked by increasing difficulty. 
We shall first investigate the influence of the smoothness of the integrand on the error decay, in line with \citep{BeBaCh19,BeBaCh20}.
Then, if we want to make DPPs on arbitrary complex manifolds practical, we need to circumvent the fact that the Bergman kernel is usually only available in the form of asymptotic estimates. 
We should thus investigate the statistical effect of working with an approximate DPP built using these estimates. 
This will be facilitated by recent results \citep{JaKe26Sub} on how to transfer results on the variance of linear statistics from one DPP to another, in particular from an intractable DPP such as the Bergman ensemble on an arbitrary complex manifold and a tractable approximation, obtained using mathematical estimates or a suitable discretization procedure.
Alternately, it might be possible to use importance sampling with a proposal DPP for which we know how to numerically evaluate the Bergman kernel, for instance, using the Kodaira embedding of a K\"ahler manifold in a complex projective space of higher dimension, where the Bergman kernel is explicit. 
Another source of inspiration is \cite{EtAr26Sub}, who lift repulsive configurations such as DPPs on small-dimensional spaces through fibrations and quasi-Monte Carlo constructions, preserving the statistical benefits of the original DPP. 

Finally, while compact complex manifolds already include important practical settings, such as Bayesian quantum tomography, extending the results to more general manifolds, e.g. with a boundary, would further broaden the applicability of DPP-based Monte Carlo integration.

\subsection*{Acknowledgements}

TL thanks Yohann Le Floch and we both thank Martin Rouault for insightful discussions. 
We acknowledge support from ERC grant BLACKJACK (ERC-2019-STG-851866) and ANR AI chair
BACCARAT (ANR-20-CHIA-0002). 

\appendix

\section{A brief reminder of complex geometry}\label{appendix}

\subsection{Basic definitions}

We start with the notion of complex manifold.

\begin{definition}\label{def:cpx_1}
A \emph{complex manifold} of dimension $d$ is a topological space $\M$ endowed with a family $(U_i,\varphi_i)_{i\in I}$ of open subsets $U_i\subset \M$ and homeomorphisms $\varphi_i:U_i\to\varphi_i(U_i)\subset\C^d$ such that, if $U_i\cap U_j\neq \varnothing$,
\[
\varphi_i\circ\varphi_j^{-1}:\varphi_j(U_i\cap U_j) \to \varphi_i(U_i\cap U_j)
\]
is a biholomorphism\footnote{That is, a holomorphic bijection whose inverse is also holomorphic.} between open subsets of $\C^d$. The open subsets $U_i$ are called \emph{charts}, and the maps $\varphi_i$ \emph{local coordinates}.
\end{definition}

The fundamental idea of complex manifolds is that the compatibility of charts and coordinates enables the use of the usual tools of complex analysis on $\C^d$. 
For instance, a function $f:\M\to\C$ is holomorphic (resp. $\mathscr{C}^s$) if for any $i\in I$ the function $f\circ\varphi_i^{-1}:\varphi_i(U_i)\to \C$ is holomorphic (resp. $\mathscr{C}^s$).

\begin{definition}\label{def:cpx_2}
Let $\M$ be a complex manifold. A \emph{holomorphic vector bundle of rank} $r$ over $\M$ is a complex manifold $E$ endowed with a holomorphic surjective map $\pi:E\to \M$ such that, for any $x\in \M$, the \emph{fiber} $E_x=\pi^{-1}(x)$ is an $r$-dimensional vector space over $\C$. 
A holomorphic vector bundle of rank $r$ is \emph{locally trivial} if there exist an open covering $(V_j)_{j\in J}$ of $\M$ and biholomorphic maps $\psi_j:\pi^{-1}(V_j)\to V_j\times\C^r$ such that the diagram
\[
\begin{tikzcd}
\pi^{-1}(V_j) \ar[rr, "\psi_j"] \ar[dr, "\pi"'] & & V_j\times\C^r \ar[dl, "\mathrm{pr}_1"]\\
& V_j &
\end{tikzcd}
\]
commutes, and such that the restriction of $\psi_j$ to $E_x$ is a $\C$-linear map for all $x\in \M$. 
The maps $\psi_j$ are called \emph{trivialization functions}.
\end{definition}
In other words, in a locally trivial vector bundle, on each $V_j$, $\pi$ is akin to the canonical projection of $V_j\times \mathbb{C}^r$ onto $V_j$. Vector bundles are usually denoted as $E\rightarrow \M$.

\begin{definition}\label{def:cpx_3}
Let $E$ be a holomorphic vector bundle over a complex manifold $\M$.
A \emph{local section} of $E$ is a continuous map $s:U\to E$, for {$U\subset \M$ open,} such that $\pi\circ s = \Id_\M$ on $U$. A local section defined on $\M$ is called a \emph{global section}.
\end{definition}

Heuristically, taking a section of the vector bundle is equivalent to taking a continuous family of vectors indexed by an open subset of $\M$, that is, a vector field on $\M$. We denote by $\mathscr{C}^s(\M,E)$ (resp. $H^0(\M,E)$) the space of $\mathscr{C}^s$ (resp. holomorphic) sections of $E$, for any $0\leq s\leq\infty$.

\subsection{Operations on vector bundles}

As a holomorphic vector bundle $E$ over a complex manifold $\M$ induces a complex vector space on each fiber, one can leverage this linear algebraic structure in two ways: to perform algebraic transformations (direct sum, tensor product, wedge product) or to enrich the line bundle. The idea is that any such operation can be done on a fiber, in a way which is explicit whenever one works in a local trivialization. 

{\bf Hermitian metrics.} If we endow $E$ with a family of Hermitian inner products $h_x:E_x\times E_x\to\C$, the vector bundle is said to be \emph{Hermitian}, and $h$ is called a \emph{Hermitian metric}. 
The regularity (e.g. continuous, differentiable, smooth) of the metric, by convention, will be the regularity of the map $x\mapsto h_x$. This regularity can be made more explicit by using the notion of local weight: let $U\subset \M$ be an open subset where $E$ can be trivialized. There exists a section $e_U:U\to E$ such that $e_U(x)\neq 0$ for all $x\in U$, called \emph{local frame}, such that for all $s\in H^0(\M,E)$, there exists $f:U\to\C$ that satisfies
\[
s(x) = f(x)e_U(x),\ \forall x\in U.
\]
In this case, the Hermitian metric $h$ reads
\[
h_x(s_1(x),s_2(x))=f_1(x)\overline{f_2}(x) h_x(e_U(x),e_U(x)) = f_1(x)\overline{f_2}(x) \e^{-\phi(x)},
\]
where $\phi:U\to\R$ is defined by $\phi(x)=-\log h_x(e_U(x),e_U(x))$ and is called the \emph{local weight} of $h$. The regularity of the metric $h$ is then equivalent to the regularity of the weight $\phi$ as a function on (an open subset of) the complex manifold $\M$. We will often identify the metric $h$ with its local weight for the sake of simplicity. We will also sometimes denote by $\vert\cdot\vert_{\phi}$ the norm induced by the metric $h$ with local weight $\phi$.

{\bf Pullback bundle.} If $E\to\M_2$ is a vector bundle with projection $\pi$ and $f:\M_1\to\M_2$ is a holomorphic function, then we define the \emph{pullback bundle} $f^*E\to\M_1$ as
\[
f^*E=\{(m,v)\in\M\times E: f(m)=\pi(v)\}\subset\M\times E,
\]
with projection $\widetilde{\pi}(m,v)=m$. 

{\bf Dual bundle.} If $E\to\M$ is a vector bundle, one can define its \emph{dual bundle} $E^*\to\M$ as the vector bundle whose fibers are the dual fibers of $E$: $(E^*)_x=(E_x)^*$. If $E$ is endowed with a Hermitian metric $h$, we shall denote $\overline{E}$ its dual. For any section $s_1$ of $E$, there is a unique\footnote{This is a well-known consequence of the Riesz representation theorem.} section $\overline{s_1}$ of $\overline{E}$ such that
\begin{equation}\label{eq:contraction}
(\overline{s_1}(x), s_2(x)) = h_x(s_2(x),s_1(x)), \quad \forall s_2\in H^0(\M,E), \forall x\in \M,
\end{equation}
where parentheses in the left-hand side denote the duality pairing. 

{\bf Tensor products of bundles.} If $E_1\to \M$ and $E_2\to \M$ are two vector bundles over the same manifold, their \emph{tensor product} is the vector bundle $E_1\otimes E_2\to \M$ whose fibers are defined by tensor products of the fibers of $E_1$ and $E_2$. 

In a similar fashion, if $E_1\to \M_1$ and $E_2\to \M_2$ are two vector bundles over two separate manifolds, their \emph{external tensor product} is the vector bundle
\[
E_1\boxtimes E_2 = \mathrm{pr}_1^*E_1\otimes\mathrm{pr}_2^*E_2\to \M_1\times \M_2,
\]
where we denoted by $\mathrm{pr}_1:\M_1\times \M_2\to \M_1$ and $\mathrm{pr}_2:\M_1\times \M_2\to \M_2$ the standard projections. It must be distinguished from the previously defined tensor product of two bundles over the same manifold $\M$.

\subsection{Differential forms and integration on complex manifolds}

Any complex manifold $\M$ of dimension $d$ is also a smooth real manifold of dimension $2d$, \emph{i.e.} it can be locally modelled on $\R^{2d}$ by the natural identification $\C\cong\R^2$. The \emph{tangent bundle} $T\M$ of $\M$ is the smooth vector bundle\footnote{Replace holomorphic by smooth and 1-dimensional complex vector space by $2d$-dimensional real vector space in Definition \ref{def:cpx_1}.} of rank $2d$ such that for any $x\in \M$, the fibre $T_x\M$ is the tangent space of $\M$ at $x$. There is a morphism of real vector bundles $J:T\M\to T\M$ such that $J^2=-\Id_{T\M}$, defined on each fibre by $J_x:T_x\M\to T_x\M$, which satisfies the Cauchy--Riemann equations on open subsets $U\subset \M$
\[
\d f_x(J_xv)=\i \times \d f_x(v),\ \forall v\in T_x\M,\ \forall f\in\mathscr{O}(U),\ \forall x\in U.
\]
The cotangent bundle $T^*\M=\Hom_\R(T\M,\C)$ splits into $\Hom_\C(T\M,\C)\oplus\Hom_\C(T\M,\bar\C)$ of $\C$-linear and $\C$-antilinear maps, where $T\M$ is endowed with the complex structure induced by the morphism $J$. We denote respectively by $T^{*(1,0)}\M$ and $T^{*(0,1)}\M$ the subspaces of this decomposition. If $(z_1,\ldots,z_d)$ is a local holomorphic coordinate system in an open subset $U\subset \M$ (for instance, in the atlas $(U_i,\varphi_i)$, it means that we note $\varphi_i(x)=(z_1,\ldots,z_d)\in\C^d$ for any $x\in U_i$), and if we set $z_j=x_j+\i y_j$, then $(x_j,y_j)$ is a local smooth coordinate system of $\M$ as a real manifold, and $(\d z_1,\ldots,\d z_d)$ is a local frame of $T^{*(1,0)}\M$, where $\d z_j=\d x_j+\i \d y_j$. Analogously, $(\d\bar{z}_1,\ldots,\d\bar{z}_d)$ is a local frame of $T^{*(0,1)}\M$, where $\d\bar{z}_j=\d x_j-\i \d y_j$.

\begin{definition}
The \emph{bundle of} $(p,q)$-\emph{forms} on a complex manifold $\M$ is the vector bundle $\Lambda^{p,q}(T^*\M)=\Lambda^p(T^{*(1,0)}\M)\otimes\Lambda^q(T^{*(0,1)}\M)$. We denote by $\Omega^{p,q}(\M)$ the subspace of smooth $(p,q)$-forms on $\M$.
\end{definition}
Any $(p,q)$-form $\omega$ on $\M$ can be expressed as follows in local coordinates:
\begin{equation}\label{eq:local_form}
\omega(x) = \sum_{\substack{1\leq i_1<\cdots<i_p\leq d\\ 1\leq j_1<\cdots<j_q\leq d}} u_{i_1,\ldots,i_p,j_1,\ldots,j_q}(x) \d z_{i_1}\wedge\cdots\wedge \d z_{i_p}\wedge \d\bar{z}_{j_1}\wedge\cdots\wedge \d\bar{z}_{j_q},
\end{equation}
and in particular any $(0,0)$-form is simply a function $f:\M\to\C$. There are two differential operators of interest, called the \emph{Dolbeault operators}:
\[
\partial:\left\lbrace\begin{array}{ccc}
\Omega^{(p,q)}(\M) & \to & \Omega^{(p+1,q)}(\M)\\
f(x)\omega(x) & \mapsto & \sum_i\frac{\partial f(x)}{\partial z_i}\d z_i\wedge\omega(x)
\end{array}\right.,\
\bar\partial:\left\lbrace\begin{array}{ccc}
\Omega^{(p,q)}(\M) & \to & \Omega^{(p+1,q)}(\M)\\
f(x)\omega(x) & \mapsto & \sum_i\frac{\partial f(x)}{\partial \bar{z}_i}\d \bar{z}_i\wedge\omega(x)
\end{array}\right.,
\]
and they can be turned into operators $\d=\partial+\bar\partial$ and $\d^c=\frac{1}{4\i\pi}(\partial-\bar\partial)$, where $\d$ coincides with the exterior derivative.

A \emph{volume form} on $\M$ is a volume form in the differential sense, therefore a nonvanishing section of $\Lambda^{2d}(T^*\M)\simeq\Lambda^{d,d}(T^*\M)$, and it can be written locally on $U\subset \M$ as
\[
\omega(x)=u(x)\d z_1\wedge\cdots\wedge \d z_d\wedge \d\bar{z}_1\wedge\cdots\wedge \d\bar{z}_d,
\]
where $u:\M\to\C$ is a function that does not vanish. The volume form is continuous (resp. smooth, holomorphic) if and only if $u$ is continuous (resp. smooth, holomorphic).

Any positive real volume form on $\M$ can be identified to a Borel measure $\d\mu$ on $\M$ by setting $\int_U \d\mu = \int_U \omega$ for any Borel set $U\subset \M$.

\bibliographystyle{plainnat}
\bibliography{MC_Bergman,stats}

\end{document}